\theoremstyle{plain}
\newtheorem{Theorem}{Theorem}[section]
\newtheorem{Lemma}{Lemma}[section]
\newtheorem{Corollary}{Corollary}[section]
\theoremstyle{definition}
\begin{document}

\title{ Extremal results on the Mostar index of trees with fixed parameters }

\author{Fazal Hayat\footnote{ E-mail: fhayatmaths@gmail.com} ,  Shou-Jun Xu\footnote{Corresponding author E-mail:  shjxu@lzu.edu.cn}\\
School of Mathematics and Statistics,  Gansu Center for Applied Mathematics,\\
 Lanzhou University,  Lanzhou 730000,  P.R. China }

 \date{}
\maketitle

\begin{abstract}
For a graph $G$, the Mostar index of $G$ is the sum of  $|n_u(e)$ - $n_v(e)|$ over all edges $e=uv$ of $G$,  where $n_u(e)$ denotes the number of vertices of $G$ that have a smaller distance in $G$ to $u$ than to $v$, and analogously for $n_v(e)$.
We determine all the graphs that maximize and minimize the  Mostar index respectively over all trees in terms of some fixed parameters like the number of odd vertices,   the number of vertices of degree two, and the number of pendent paths of fixed length.     \\ \\
{\bf Keywords}: Mostar index,   tree, odd vertex, vertex of degree two,  pendent path.\\\\
{\bf 2010 Mathematics Subject Classification:}  05C12; 05C90
\end{abstract}

\section{Introduction}
Let $G$ be a simple connected  graph  with vertex set $V(G)$ and edge set $E(G)$.    The distance between $u$ and $v$ in $G$ is the least length of the path connecting $u$ and $v$ and is denoted by $d_G(u,v)$.

A molecular graph is a simple graph such that its vertices correspond to the atoms and the edges to the bonds of a molecule.   A topological index of $G$ is a real number related to $G$.  They are widely used for characterizing molecular graphs, establishing relationships between the structure and properties of molecules, predicting the biological activity of chemical compounds, and making their chemical applications.

Dosli\'c et al. \cite{DoM} introduced a bond-additive structural invariant as a quantitative refinement of the distance non-balancedness and also a measure of peripherality in graphs, named the Mostar index. For   graph $G$, the Mostar index is defined as
\[
 Mo(G)=\sum_{e=uv \in E(G)}\psi(uv),
\]
where $\psi(uv)=|n_u - n_v|$, and $n_u$ denotes the number of vertices of $G$ closer to $u$ than to $v$ and $n_v$ is the number of vertices closer to $v$ than to $u$.

 Finding extremal (maximum and minimum) graphs either for the whole class of graphs of certain fixed order or within  some sub-classes of these graphs is an important line of research  and has received much attention. For example, Dosli\'c et al. \cite{DoM} determined all the graphs that maximize and minimize the Mostar index among all trees and unicyclic graphs. Tepeh \cite{Te} obtained the graphs having the smallest and largest  Mostar index over all bicyclic graphs. Hayat and Zhou \cite{HZ} determined  the $n$-vertex cacti with the largest Mostar index, and  obtained a sharp upper bound for the Mostar index among cacti of fixed order and cycles, and characterized the extremal graphs. In \cite{HZ1}, Hayat and Zhou identified all the graphs that minimize and/or maximize the Mostar index among all trees with fixed parameters like the maximum degree, the diameter, and the number of pendent vertices. Hayat and Xu \cite{HX} investigated the lower bound on the Mostar index in tricyclic graphs, and characterized the graphs that attain the bound.
Deng and Li \cite{DL} obtained those trees with a fixed degree sequence having the largest Mostar index. In \cite{DL1}, Deng and Li studied the extremal problem for the Mostar  index among trees with a given number of segments sequence. In \cite{DL2}, Deng and Li  determined the greatest Mostar index over all chemical trees of fixed order, and the least Mostar index among trees with fixed order and diameter.  Liu and Deng \cite{LD} obtained the maximum Mostar index among unicyclic graphs with a given diameter.
More related work about the Mostar index can be found in \cite{ACT, AD, AXK, DL3, GXD, HLM, MPR,  XZT, XZT2} and the references cited therein.

To have a full understanding of the relationship between the  Mostar index and the structural properties of the graphs. In this paper, we further consider
the Mostar index among trees in terms of some other fixed parameters, and more precisely, we identify all the graphs that maximize and minimize the Mostar index among $n$-vertex trees with fixed parameters like  the number of odd vertices,   the number of vertices of degree two, and the number of pendent paths of fixed length.

\section{Preliminaries}

For  $v \in V(G)$,  $N_G(v)$ denotes the set of vertices that are adjacent to $v$ in $G$. The degree of $v \in V(G)$, denoted by $d_G(v)$,  is the cardinality of  $N_G(v)$.
Suppose $e$ is an edge and $x,y$ are two non-adjacent vertices of $G$. Then $G - e$ is the subgraph of $G$ obtained by deleting the edge $e$ and $G + xy$ is a graph obtained from $G$ by adding an edge connecting $x, y$. The graph formed from $G$ by deleting a vertex $v \in V(G)$ (and its incident edges) is denoted by $G - v$. The diameter of $G$ is the largest distance between any two vertices of $G$.
A path $P = u_0u_1 \cdots u_r$ with $r \geq 1$ in a graph $G$ is called a pendent path of length $r$ at $u_r$, if $d_G(u_0) = 1$, $d_G(u_r) \geq 2$, and if $r \geq 2$, $d_G(u_i) = 2$ for $1\leq i \leq r-1$. Particularly, a pendent path of length one at $v $ is just a pendent edge at $v$.  A pendent vertex is a vertex of degree one.  A vertex is called an odd (resp. even) vertex if its degree is odd (resp. even). A vertex in a tree with a degree of at least three is known as a branch vertex. A vertex is called an internal vertex if its degree is of at least 2. A tree whose all internal vertices have degree of at least three is called a series-reduced tree or a homeomorphically irreducible tree \cite{Has, HP}. By $S_n$ and $P_n$ we denote the star and path on $n$ vertices, respectively.
A caterpillar is a tree for which there exists some fixed path with every vertex either on the path or adjacent to a vertex on the path (the path is called the spine of the caterpillar). Note that $S_n$ and $P_n$ are caterpillars.

\begin{Theorem}\cite{DoM}\label{star}
Among all trees of order $n$,  $ S_n$ (resp. $ P_n$ ) is the unique tree with maximum (resp. minimum) Mostar index.
\end{Theorem}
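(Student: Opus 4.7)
The plan rests on a single structural observation about trees: deleting any edge $e=uv$ of $T$ yields exactly two components, whose sizes $n_u(e)$ and $n_v(e)=n-n_u(e)$ sum to $n$. Hence $\psi(uv)=|2n_u(e)-n|$, and since $1\le n_u(e)\le n-1$ we have the uniform bound $\psi(uv)\le n-2$, with equality precisely when $e$ is a pendent edge. Writing $a(e)=\min\{n_u(e),n_v(e)\}$, this also gives the equivalent formulation
\[
Mo(T)=n(n-1)-2\sum_{e\in E(T)}a(e),
\]
which is more convenient for the lower bound.

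For the maximum, the argument is immediate: summing $\psi(e)\le n-2$ over the $n-1$ edges of $T$ yields $Mo(T)\le(n-1)(n-2)$, with equality forcing every edge to be pendent. A tree in which every edge is pendent must be a star, so $T=S_n$; a direct count confirms $Mo(S_n)=(n-1)(n-2)$.

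For the minimum, I would first compute $Mo(P_n)$ directly: summing $|2i-n|$ over $i=1,\dots,n-1$ gives $n(n-2)/2$ for even $n$ and $(n-1)^2/2$ for odd $n$. For a general tree with $T\ne P_n$, the plan is a local grafting transformation. Fix a longest path $P:p_0p_1\cdots p_d$ in $T$ and let $j\ge 1$ be the smallest index with $d_T(p_j)\ge 3$, which exists since $T$ is not a path. Because $P$ is longest, any neighbor of $p_j$ off of $P$ starts a pendent path $p_jw_1w_2\cdots w_s$ with $s\le j$. Define $T'$ by deleting the edge $p_jw_1$ and adding the edge $p_0w_1$, so that $w_1w_2\cdots w_s$ becomes a pendent path hanging off the former endpoint $p_0$, strictly lengthening the diameter. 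I expect to prove $Mo(T')<Mo(T)$, and iterating the construction (the diameter strictly grows at each step, so the process terminates with $P_n$) then yields $Mo(T)>Mo(P_n)$.

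The main obstacle is the strict-decrease claim $Mo(T')<Mo(T)$. Edges internal to $w_1\cdots w_s$ and edges lying on the portion of $T$ opposite to the graft contribute identically in $T$ and $T'$. The delicate part is the segment $p_0p_1\cdots p_j$: the $s$ relocated vertices switch from the $p_j$-side in $T$ to the $p_0$-side in $T'$, shifting the value of $a(e)$ for every such edge. The bulk of the proof is an edge-by-edge accounting showing that these shifts produce a net strict \emph{increase} in $\sum_e a(e)$, which by the equivalent formulation is a strict decrease in $Mo$. The numerical constraint $s\le j$, forced by $P$ being longest, controls how the $a$-values behave and is what drives the inequality.
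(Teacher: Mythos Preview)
First, note that the present paper does not prove this theorem at all: it is quoted from Do\v{s}li\'c et al.\ in the preliminaries section without proof, so there is nothing here to compare your argument against. I therefore assess your proposal on its own merits.

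Your treatment of the maximum is correct and standard: the edgewise bound $\psi(e)\le n-2$, with equality precisely for pendent edges, forces $T\cong S_n$. For the minimum, the overall strategy (a local graft that strictly lowers $Mo$, iterated until the diameter reaches $n-1$) is sound and is essentially the approach underlying Lemma~\ref{333}. However, there is a genuine gap in your setup. You assert that ``because $P$ is longest, any neighbor of $p_j$ off of $P$ starts a pendent path $p_jw_1\cdots w_s$ with $s\le j$.'' This is false: the longest-path hypothesis bounds only the \emph{depth} of the off-path branch at $p_j$, not its shape. For instance, take $P=p_0p_1\cdots p_7$ and attach at $p_3$ a vertex $w_1$ carrying two further leaves $w_2,w_3$; here $j=3$, $P$ is the unique longest path, yet the branch $\{w_1,w_2,w_3\}$ is not a pendent path at $p_3$. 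Once the branch fails to be a path, the parameter $s$ and the inequality $s\le j$ on which your edge-by-edge accounting rests are undefined, and the comparison as written does not go through.

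The fix is mild. One clean route is to choose instead a branch vertex $v$ that already carries at least two pendent paths; such a vertex exists in any tree $T\ne P_n$ (suppress all degree-two vertices and use pigeonhole on the resulting homeomorphically irreducible tree to find an internal vertex with two leaf neighbours). With pendent paths of lengths $\ell\ge m\ge 1$ at $v$ one has $Mo(G_{v;\ell,m})>Mo(G_{v;\ell+1,m-1})$ exactly as in Lemma~\ref{333}, and iterating merges the two paths into one while strictly decreasing $Mo$ and strictly increasing the diameter, preserving your termination argument.
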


Let $G$ be a graph with a cut edge $e=uv$, and let $G/e$ be the graph obtained from $G$ by contracting the edge $e$ into a new vertex $w$ such that it is adjacent to each vertex in $N_G(u)\cup N_G(v)\setminus\{u,v\}$ and then attaching  a pendent edge at $w$.

\begin{Lemma}\cite{DoM}\label{cut}
 Let $G$ be a graph with a cut edge $e$. If $e$ is not a pendent edge, then $ Mo(G/e)> Mo(G)$.
\end{Lemma}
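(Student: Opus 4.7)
The plan is to split $Mo(G)$ and $Mo(G/e)$ into contributions from the distinguished edges (the cut edge $e$ in $G$ and the new pendent edge in $G/e$) and from the remaining edges, show that the remainder is invariant under the contraction, and then check that the cut-edge contribution strictly grows.

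For setup: since $e = uv$ is a cut edge, $G - e$ has two components $G_u \ni u$ and $G_v \ni v$; let $n_1 = |V(G_u)|$ and $n_2 = |V(G_v)|$. The hypothesis that $e$ is not pendent forces $d_G(u), d_G(v) \geq 2$, so $n_1, n_2 \geq 2$. The graph $G/e$ has the same order $n = n_1 + n_2$ as $G$, and there is a natural bijection between $E(G) \setminus \{e\}$ and $E(G/e) \setminus \{wp\}$, where $p$ is the new pendent vertex attached to $w$.

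The key claim is that for every edge $f \neq e$ of $G$, the contribution $\psi(f)$ is preserved under this bijection. Consider $f = xy \in E(G_u)$ (the case $f \in E(G_v)$ is symmetric). Because $e$ is a cut edge, every $z \in V(G_v)$ satisfies $d_G(z, x) - d_G(z, y) = d_{G_u}(u, x) - d_{G_u}(u, y)$, so the side of $f$ occupied by $z$ agrees with the side occupied by $u$. The analogous identity holds in $G/e$ with $w$ playing the role of $u$ and $(V(G_v) \setminus \{v\}) \cup \{p\}$ playing the role of $V(G_v)$. Under the identification $u \leftrightarrow w$, $v \leftrightarrow p$, and the identity on the remaining vertices, each side of $f$ has the same cardinality in $G$ as in $G/e$, so $\psi(f)$ is unchanged.

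For the two distinguished edges, $\psi(e) = |n_1 - n_2|$ in $G$ and $\psi(wp) = n - 2$ in $G/e$ since $wp$ is a pendent edge. Combining with the invariance gives $Mo(G/e) - Mo(G) = (n - 2) - |n_1 - n_2|$, and $n_1, n_2 \geq 2$ with $n_1 + n_2 = n$ forces $|n_1 - n_2| \leq n - 4$, so the difference is at least $2 > 0$. I expect the main obstacle to be the invariance step: it is intuitive but demands careful bookkeeping because $f$ may itself be incident to $u$, and because the raw distances $d_G(z, x)$ and $d_{G/e}(z, x)$ for $z \in V(G_v)$ differ by $1$ due to the contracted edge, which is why one must compare sides of $f$ rather than raw distances themselves.
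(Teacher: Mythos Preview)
Your argument is correct. In the present paper this lemma is merely cited from Do\v{s}li\'c et al.\ \cite{DoM} and no proof is given here, so there is nothing to compare against directly; your approach---separating off the contributions of $e$ and of the new pendent edge $wp$, verifying that every other edge $f$ has $\psi_G(f)=\psi_{G/e}(f)$ via the side-preserving bijection $u\leftrightarrow w$, $v\leftrightarrow p$, and then using $|n_1-n_2|\le n-4<n-2$---is the standard way this fact is established.
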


Let $G$ be a nontrivial graph with $u \in V(G)$. For two nonnegative integers $\ell$ and  $m$, let $G_{u;\ell, m}$ be the graph obtained from $G$ by attaching two pendent paths of length $\ell$ and $m$, respectively  at $u$. In particular, $G_{u;0,0}=G$ and $G_{u;\ell, 0}$ is obtained from $G$  by attaching a pendent path of length $\ell$.

\begin{Lemma}\cite{DoM}\label{333}
Let $G$ be a nontrivial  graph with $u \in V(G)$. If $\ell\geq m\geq 1$, then  $Mo(G_{u;\ell,m}) > Mo(G_{u;\ell+1,m-1})$.
\end{Lemma}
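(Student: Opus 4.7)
The plan is to isolate the difference $Mo(G_{u;\ell,m}) - Mo(G_{u;\ell+1,m-1})$ to a single pair of pendent-path terms by showing that the contribution of every other edge is unchanged between the two graphs. Write $n_0 := |V(G)| \geq 2$ (since $G$ is nontrivial) and $n := n_0 + \ell + m$; both graphs have order $n$.

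First I would check that for every edge $xy \in E(G)$, the value $\psi(xy)$ is the same in $G_{u;\ell,m}$ and in $G_{u;\ell+1,m-1}$. The key observation is that any attached path vertex $z$ reaches $G$ only through $u$, so $d(z,x) - d(z,y) = d_G(u,x) - d_G(u,y)$; hence each of the $\ell + m$ attached vertices lies on the same $xy$-side as $u$ (or is equidistant precisely when $u$ is). The contribution of the attached vertices to $\psi(xy)$ therefore depends only on the total $\ell + m$, which is preserved by the transformation.

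Next I would compute the pendent-path contribution explicitly. An edge $v_{i-1}v_i$ on the path of length $\ell$ (where $v_0 = u$) is a cut edge whose removal isolates the $\ell - i + 1$ vertices $v_i, \ldots, v_\ell$ from the remaining $n - (\ell - i + 1)$ vertices, so it contributes $|n - 2(\ell - i + 1)|$. Summing over the two paths gives $\sum_{k=1}^{\ell}|n - 2k| + \sum_{k=1}^{m}|n - 2k|$ for $G_{u;\ell,m}$, and the analogous sum with bounds $\ell + 1$ and $m - 1$ for $G_{u;\ell+1, m-1}$. Combining this with the first step, the difference telescopes to
\[
Mo(G_{u;\ell,m}) - Mo(G_{u;\ell+1,m-1}) = |n - 2m| - |n - 2\ell - 2|.
\]

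The final step is a short verification that this scalar quantity is strictly positive. The inequality $n - 2m = n_0 + \ell - m > 0$ is immediate from $n_0 \geq 2$ and $\ell \geq m$, so the first term equals $n - 2m$. A brief case split on the sign of $n - 2\ell - 2 = n_0 + m - \ell - 2$ reduces the expression to either $2(\ell - m + 1)$ or $2(n_0 - 1)$, both of which are at least $2$. The main obstacle is really just the first step: one must justify carefully why the $G$-edge contributions cancel exactly, which relies on the pendent-path vertices having forced-unique access to $G$ via $u$. Once this cancellation is in place, the remainder is routine arithmetic on absolute values.
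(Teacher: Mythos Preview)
Your argument is correct. The cancellation in Step~1 is justified exactly as you say: $u$ is a cut vertex of $G_{u;\ell,m}$, so every pendent-path vertex $z$ satisfies $d(z,x)-d(z,y)=d(u,x)-d(u,y)$ for any $xy\in E(G)$, and hence the side of $z$ relative to $xy$ depends only on $u$. The telescoping in Step~2 and the case split in Step~3 are also right; both cases indeed yield a difference of at least $2$.

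There is no proof in this paper to compare against: the lemma is quoted from Do\v{s}li\'c et al.\ \cite{DoM} and stated without proof here. Your write-up is in fact the standard direct computation one finds for this result, so there is nothing materially different to contrast.
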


\begin{Lemma}\cite{HZ}\label{111}
Let $G$ be a graph of order $n$ with a cut edge $e=uv$.  Then $\psi_G(e)\leq(n-2)$  with equality if and only if  $e=uv$ is a pendent edge.
\end{Lemma}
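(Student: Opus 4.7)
My plan is as follows. Since $e=uv$ is a cut edge, the subgraph $G-e$ splits into exactly two components; let $G_u$ be the component containing $u$ and $G_v$ the component containing $v$, and set $a=|V(G_u)|$ and $b=|V(G_v)|$, so $a+b=n$ and $a,b\geq 1$.

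The key observation is that every vertex of $G$ other than $u,v$ is strictly closer to exactly one of $u,v$. Indeed, for any $x\in V(G_u)\setminus\{u\}$, every $x$–$v$ path in $G$ must use the cut edge $e$, so $d_G(x,v)=d_G(x,u)+1>d_G(x,u)$; symmetrically for $x\in V(G_v)\setminus\{v\}$. Counting $u$ itself on the $u$-side and $v$ itself on the $v$-side, this gives exactly
\[
n_u(e)=a,\qquad n_v(e)=b.
\]
Hence
\[
\psi_G(e)=|n_u(e)-n_v(e)|=|a-b|.
\]

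Now $a,b\geq 1$ together with $a+b=n$ force $|a-b|\leq n-2$, with equality if and only if $\{a,b\}=\{1,n-1\}$. The latter condition is exactly the statement that one endpoint of $e$ is a pendent vertex, i.e.\ $e$ is a pendent edge. This completes the proof.

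The argument is essentially bookkeeping once the cut-edge structure is invoked; there is no real obstacle. The only point worth stating carefully is the strict inequality $d_G(x,v)=d_G(x,u)+1$ for $x\neq u,v$, which is where the hypothesis that $e$ is a cut edge (not just any edge) is used.
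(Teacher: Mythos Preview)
Your proof is correct. The paper does not supply its own argument for this lemma; it is quoted from \cite{HZ} without proof, so there is nothing to compare against beyond noting that your reasoning is the standard one: a cut edge partitions the vertex set into two blocks of sizes $a$ and $b=n-a$, every vertex lies strictly closer to the endpoint in its own block, and $|a-b|\le n-2$ with equality exactly when one block is a singleton.
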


\begin{Lemma}\cite{HZ1}\label{222}
Let $T$ be a tree with $x,y\in V(T)$. Suppose that  $x_1, \dots , x_s$ and $y_1, \dots , y_t$ are pendent vertices adjacent to $x$ and $y$, respectively.
Let $x'$ and $y'$ be respectively the neighbors of $x$ and $y$ in  the path connecting $x$ and $y$. Let
  $T'= T- \{xx_i: i=1,\dots, s\} + \{x'x_i:i=1,\dots, s\}$, and
  $T''= T- \{yy_i: i=1,\dots, t\} + \{y'y_i:i=1,\dots, t\}$.
Then $Mo(T)<\max\{ Mo(T'), Mo(T'')\}$.
\end{Lemma}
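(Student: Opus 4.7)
The plan is to compare $Mo(T)$ with each of $Mo(T')$ and $Mo(T'')$ edge by edge, identify the single edge whose Mostar contribution actually changes under each transformation, and then conclude via a short counting inequality. The principal obstacle is the edge-by-edge bookkeeping; the arithmetic at the end is routine.

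\textbf{Setup.} Write the $x$--$y$ path in $T$ as $P: x = v_0, v_1, \dots, v_{d-1}, v_d = y$, so $v_1 = x'$ and $v_{d-1} = y'$. For each $i$, let $T_i$ denote the component of $T - E(P)$ containing $v_i$ and set $n_i = |V(T_i)|$. Then $\sum_{i=0}^d n_i = n$, so in particular $n_0 + n_d \leq n$, and by hypothesis $\{x_1, \dots, x_s\} \subseteq V(T_0)$ and $\{y_1, \dots, y_t\} \subseteq V(T_d)$. The transformation $T \mapsto T'$ changes only $n_0$ and $n_1$, which become $n_0 - s$ and $n_1 + s$; analogously for $T''$ at indices $d-1$ and $d$.

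\textbf{Localising the change.} The key claim is that $T \mapsto T'$ alters $\psi(\cdot)$ for exactly one edge, namely $xx' = v_0 v_1$. For the pendent edges $xx_i \in E(T)$ and their counterparts $x'x_i \in E(T')$, both contribute $n - 2$ by Lemma \ref{111}. For every other edge $e \in E(T) \cap E(T')$, a short case analysis (either $e$ lies inside some $T_j$, or $e = v_{j-1} v_j$ with $j \geq 2$) shows that $x_1, \dots, x_s$ lie on the same side of $e$ in both trees: namely the side containing $v_0$, which coincides with the side containing $v_1$ since $e \neq xx'$. Hence the two side-sizes, and so $\psi(e)$, are unchanged. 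For $e = xx'$ itself the side-sizes pass from $(n_0, n - n_0)$ to $(n_0 - s, n - n_0 + s)$, giving
\[
Mo(T') - Mo(T) = |2(n_0 - s) - n| - |2n_0 - n|,
\]
and by the symmetric argument $Mo(T'') - Mo(T) = |2(n_d - t) - n| - |2n_d - n|$.

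\textbf{At least one difference is positive.} Since $(c - 2k)^2 - c^2 = 4k(k - c)$, we have $|c - 2k| > |c|$ iff $c < k$ whenever $k \geq 1$. Hence $Mo(T') > Mo(T)$ iff $n_0 < (n + s)/2$, and $Mo(T'') > Mo(T)$ iff $n_d < (n + t)/2$. If both failed, adding $n_0 \geq (n+s)/2$ and $n_d \geq (n+t)/2$ would force $n_0 + n_d \geq n + (s+t)/2 > n$ (using $s, t \geq 1$), contradicting $n_0 + n_d \leq n$. Thus at least one holds, which gives $Mo(T) < \max\{Mo(T'), Mo(T'')\}$.
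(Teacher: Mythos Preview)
The paper does not supply its own proof of this lemma; it is quoted from \cite{HZ1} without argument, so there is nothing in the present paper to compare against.

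Your proof is correct. The localisation step is sound: for any edge $e\neq xx'$ and $e\neq xx_i$, the vertices $v_0$ and $v_1$ lie in the same component of $T-e$ (and of $T'-e$), so each $x_i$ sits on that common side regardless of whether it is attached to $v_0$ or to $v_1$; hence the two side-counts, and thus $\psi(e)$, are unchanged. The pendent edges $xx_i$ and $x'x_i$ each contribute $n-2$, and the single remaining edge $xx'$ yields exactly the difference you record. The final counting, using $(c-2k)^2-c^2=4k(k-c)$ together with $n_0+n_d\le n$ and $s,t\ge1$, rules out the possibility that both $Mo(T')\le Mo(T)$ and $Mo(T'')\le Mo(T)$, which gives the strict inequality claimed. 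This edge-contribution bookkeeping plus a parity/size contradiction is precisely the style of argument one expects from \cite{HZ1}, so your approach is almost certainly the intended one.
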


For $3\le r\le n-2$,  let $S_{n,r}$ be the tree consisting of $r$ pendent paths of almost equal lengths i.e., $t$ pendent paths of length $\lfloor \frac{n-1}{r}\rfloor+1$ and $r -t $ pendent paths of length $\lfloor \frac{n-1}{r}\rfloor$ at a common vertex with $t= n-1-r\lfloor \frac{n-1}{r}\rfloor$. Particularly, $S_{n,n-1}=S_n$, and $S_{n,2}= P_n$.

\begin{Theorem}\cite{HZ1} \label{100}
Let $T$ be a tree of order $n$ with $r$ pendent vertices, where $3\le r\le n-2$, with maximum Mostar index. Then  $T \cong S_{n,r}$.
\end{Theorem}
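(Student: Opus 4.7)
The plan is a two-stage argument: (a) show that an extremal $n$-vertex tree $T$ with $r$ pendent vertices must be a spider (i.e., have a unique branch vertex), and then (b) show that its $r$ pendent paths have almost equal lengths, which forces $T \cong S_{n,r}$.

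For stage (a), assume for contradiction that an extremal $T$ has at least two branch vertices. Pick two branch vertices $x, y$ whose joining path has no interior branch vertex, and let $x', y'$ be the neighbors of $x, y$ on this path. If $x$ has a pendent neighbor $x_1$ and $y$ has a pendent neighbor $y_1$, then apply Lemma~\ref{222} with $s = t = 1$. The key observation is that since $d_T(x), d_T(y) \ge 3$ (as branch vertices), removing a single pendent edge leaves that vertex with degree $\ge 2$, hence non-pendent; so both $T'$ and $T''$ still have exactly $r$ pendent vertices. Lemma~\ref{222} then yields $Mo(T) < \max\{Mo(T'), Mo(T'')\}$, contradicting maximality.

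The main obstacle is the case in which some branch vertex $x$ has no pendent neighbor at all (so every neighbor of $x$ has degree $\ge 2$ and pendent paths, rather than pendent leaves, emanate from $x$). In this situation Lemma~\ref{222} cannot be invoked directly at $x$, and a supplementary argument is needed. I would combine this with Lemma~\ref{333}: at any branch vertex $u$ with two pendent paths of unequal lengths $\ell > m \ge 1$, rebalancing them via $(\ell, m) \to (\ell-1, m+1)$ strictly increases $Mo$ while preserving both $n$ and $r$. Hence in the extremal $T$, either every branch vertex has at least one pendent neighbor (reducing to the previous case along the chosen pair $x, y$), or one can descend along a pendent path from $x$ to find a branch vertex closer to the leaves that does have a pendent neighbor, and apply the single-leaf version of Lemma~\ref{222} there. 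Either way we reach a contradiction, so $T$ has a unique branch vertex.

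For stage (b), once $T$ is a spider with center $u$ and pendent path lengths $\ell_1 \ge \cdots \ge \ell_r$ summing to $n-1$, Lemma~\ref{333} shows that whenever $\ell_i \ge \ell_j + 2$ one can rebalance by replacing $(\ell_i, \ell_j)$ with $(\ell_i - 1, \ell_j + 1)$, strictly increasing $Mo$. Hence in the extremum all $\ell_i$ differ by at most one, i.e., $\ell_i \in \{\lfloor (n-1)/r \rfloor, \lceil (n-1)/r \rceil\}$, which uniquely describes $S_{n,r}$. Stage (b) is essentially immediate from Lemma~\ref{333}; the hard part, as noted, is the case analysis in stage (a) ensuring that the pendent-count is preserved throughout the consolidation of branch vertices.
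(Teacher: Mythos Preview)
The paper does not actually prove Theorem~\ref{100}: it is quoted from \cite{HZ1} and used as a black box. So there is no ``paper's own proof'' to compare against, and your proposal must stand on its own.

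Your two-stage strategy (first force a spider, then balance the legs) is the natural one, and stage~(b) is correct via Lemma~\ref{333}; the only quibble is that the rebalancing step $(\ell,m)\to(\ell-1,m+1)$ requires $\ell\ge m+2$, not merely $\ell>m$, but that is exactly what you need to force all leg-lengths to differ by at most one.

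Stage~(a), however, has a real gap. Lemma~\ref{222} requires \emph{both} $x$ and $y$ to have at least one pendent neighbor, and your fallback for when this fails does not work. You write that one can ``descend along a pendent path from $x$ to find a branch vertex closer to the leaves that does have a pendent neighbor,'' but a pendent path by definition contains no branch vertices, so there is nothing to descend to. Concretely, take $n=10$, $r=4$, and let $T$ consist of two adjacent branch vertices $x,y$ each carrying two pendent paths of length~$2$. Then $x$ and $y$ are the only branch vertices, neither has a pendent neighbor, and Lemma~\ref{222} is inapplicable anywhere in $T$. Your argument produces no transformation here, yet $Mo(T)=56<60=Mo(S_{10,4})$, so something must be said.

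What is actually needed is a subtree-moving analogue of Lemma~\ref{222}: if $x$ and $y$ are branch vertices and $B$ is an entire branch at $x$ not containing $y$, then detaching $B$ from $x$ and reattaching it at $x'$ (or at $y$) increases $Mo$ for at least one of the two symmetric choices, while preserving $r$ provided $d_T(x)\ge 3$. This is the lemma that the original source \cite{HZ1} presumably supplies (and it is what resolves the $n=10$ example above in one step, turning $T$ into $S_{10,4}$). With only the pendent-vertex version stated in the present paper, stage~(a) cannot be completed as written.
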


\begin{Corollary}\label{cr} If $2 \leq r \leq n-2$, then $Mo(S_{n,r})<Mo(S_{n,r+1})$.
\end{Corollary}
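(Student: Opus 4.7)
The plan is to exhibit a tree $T$ on $n$ vertices with exactly $r+1$ pendent vertices satisfying $Mo(T)>Mo(S_{n,r})$, and then to invoke Theorem~\ref{100} (which forces $Mo(S_{n,r+1})\ge Mo(T)$) to conclude. Intuitively, $T$ will be obtained from $S_{n,r}$ by peeling off a single pendent edge from the longest pendent path and reattaching it directly at the center; the effect of this operation on the Mostar index is exactly what Lemma~\ref{333} controls.

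Concretely, let $v$ be the central vertex of $S_{n,r}$ from which its $r$ pendent paths emanate, and write $\ell_1=\lceil(n-1)/r\rceil$ for the length of the longest such path. Since $r\le n-2$, we have $(n-1)/r>1$ and hence $\ell_1\ge 2$. Let $G$ denote the subtree of $S_{n,r}$ obtained by deleting every vertex of this longest pendent path except $v$ itself; then $G$ is nontrivial, $v\in V(G)$, and $S_{n,r}=G_{v;\ell_1,0}$. Set $T:=G_{v;\ell_1-1,\,1}$; this is an $n$-vertex tree in which, at the vertex $v$, the original length-$\ell_1$ pendent path has been replaced by two pendent paths of lengths $\ell_1-1$ and $1$. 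Applying Lemma~\ref{333} with $\ell=\ell_1-1$ and $m=1$ (valid because $\ell_1\ge 2$) gives
\[
Mo(T)=Mo(G_{v;\ell_1-1,1})>Mo(G_{v;\ell_1,0})=Mo(S_{n,r}).
\]

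Finally, $T$ has exactly $r+1$ pendent vertices: the $r-1$ pendent vertices inherited from the unmodified pendent paths of $G$, together with the two endpoints of the newly attached paths. For $2\le r\le n-3$, Theorem~\ref{100} gives $Mo(T)\le Mo(S_{n,r+1})$, which combined with the display above yields $Mo(S_{n,r})<Mo(S_{n,r+1})$. The boundary case $r=n-2$ requires no separate treatment, since then $\ell_1=2$ forces $T=S_n=S_{n,n-1}$ and the desired strict inequality is already contained in the display. The only nonroutine step is the choice of construction — splitting the longest pendent path in the specific ratio $(\ell_1-1):1$ so that $T$ acquires exactly one extra pendent vertex while Lemma~\ref{333} applies in the favorable direction; everything else is bookkeeping.
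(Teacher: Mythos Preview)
Your proof is correct and follows essentially the same route as the paper: construct an $n$-vertex tree $T$ with $r+1$ leaves and strictly larger Mostar index than $S_{n,r}$, then invoke Theorem~\ref{100}. The only cosmetic difference is that the paper obtains $T$ by contracting the center-to-neighbor edge on the longest arm (Lemma~\ref{cut}), whereas you obtain the \emph{same} tree $T=G_{v;\ell_1-1,1}$ via the path-splitting viewpoint and justify the inequality with Lemma~\ref{333}; both lemmas are from \cite{DoM} and are interchangeable here.
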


\begin{proof}
Let $u$ be a vertex in $S_{n,r}$ with degree $r$ and $uv$ be an edge in the longest path of $S_{n,r}$, where if $r = 2$, $u$ is any vertex of degree two and $uv$ is in the longer sub-path with a terminal vertex $u$. By Lemma \ref{cut}, $ Mo(S_{n,r}/uv)> Mo(S_{n,r})$. Note that $S_{n,r}/uv$ is a tree  of order $n$ with $r+1$ pendent vertices.  By Theorem \ref{100}, $Mo(S_{n,r}/uv )\leq Mo(S_{n,r+1})$. It follows that $Mo(S_{n,r})<Mo(S_{n,r+1})$.
\end{proof}

Let $T_z(d_{i_1}, d_{i_2},...,d_{i_z})$ be the caterpillar of order $n$  such that $ v_1v_2 \cdots v_z$ is the spine of  $T_z(d_{i_1}, d_{i_2}, \dots ,d_{i_z})$, where the degree of $v_j$  is $d_{i_j}$ with $j = 1 , 2 , \dots , z$.

%Let $\mathcal{T}_d$ be the set of all trees with degree sequence  $(d_1 , d_2, ..., d_z)$, where $d_1\geq d_2\geq...\geq d_z \geq 2, z\geq 2$.
\begin{Lemma}\cite{DL}\label{imp}
Let $T$ be a tree with degree sequence  $(d_1 , d_2, ..., d_z)$, where $d_1\geq d_2\geq \cdots \geq d_z \geq 2, z\geq 2$ such that $T$  minimizes the Mostar index. Then $T \cong T_z(d_{i_1}, d_{i_2},\dots,d_{i_z})$, where $d_{i_1}\geq d_{i_2}\geq,\dots,\geq d_{i_s}$ and $d_{i_{s+1}} \leq d_{i_{s+2}} \leq, \dots ,\leq d_{i_z}$ for some $s \in \{1,2,\dots,z-1\}$.
\end{Lemma}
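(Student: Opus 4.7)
The plan is to split the argument into two stages. First, observe that in any tree with the prescribed degree sequence, the number of leaves equals $n-z$, and by Lemma~\ref{111} each such pendent edge contributes exactly $n-2$ to $Mo(T)$, so the pendent-edge contribution $(n-z)(n-2)$ is constant across this class. Thus minimizing $Mo(T)$ is equivalent to minimizing $\sum_{e \in E(T^*)} \psi_T(e)$, where $T^* := T - L(T)$ denotes the trunk of $T$. This reduces the question to one about internal edges only.

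\emph{Stage 1 (caterpillar reduction).} I would argue that any minimizer $T$ is a caterpillar. Suppose instead that $T^*$ is not a path. Then $T^*$ has a branch vertex $b$ of trunk-degree at least $3$ with trunk-neighbors $a_1, a_2, a_3$, and the corresponding components $B_1, B_2, B_3$ of $T - b$ satisfy, say, $|B_3| \leq |B_1|$. Pick an internal vertex $v' \in V(B_1)$ adjacent to a leaf $\ell$ of $T$ at maximum distance from $b$ inside $B_1$, and apply the degree-preserving $2$-swap: delete $\{b a_3,\, v'\ell\}$ and insert $\{v' a_3,\, b \ell\}$. The resulting tree $T'$ has the same degree sequence, its trunk has one fewer branch vertex (none is created at $v'$), and only the $\psi$-values of edges on the $b$-to-$v'$ path change; on each such edge the $b$-side count drops by $|B_3|-1$ while the $v'$-side count rises by the same amount. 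A sign-based bookkeeping along this path then yields $Mo(T') < Mo(T)$, contradicting minimality; iterating drives $T$ to a caterpillar.

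\emph{Stage 2 (valley arrangement).} Writing $T = T_z(d_{j_1}, \ldots, d_{j_z})$ with spine $v_1 \cdots v_z$, set $A := n - 2 = \sum_{k=1}^{z}(d_k - 1)$ and $S_j := \sum_{k=1}^{j}(d_{j_k} - 1)$. A direct count of the vertex split across each spine edge gives
\[
\psi(v_j v_{j+1}) = |\,2 S_j - A\,|,
\]
so minimizing the spine contribution becomes minimizing $\sum_{j=1}^{z-1} |2 S_j - A|$ over all orderings of $(d_1 - 1, \ldots, d_z - 1)$. An adjacent transposition of $d_{j_t}$ and $d_{j_{t+1}}$ changes only $S_t$, and by $\pm(d_{j_{t+1}} - d_{j_t})$. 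A short case analysis on the sign of $2 S_t - A$ shows that whenever the current arrangement contains a strict interior local maximum (i.e., fails the decrease-then-increase pattern), some adjacent transposition strictly reduces one term $|2 S_t - A|$ while leaving all others unchanged. Iterating eliminates every such interior maximum, and the minimizer must then take the valley form asserted in the lemma.

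The main obstacle is the bookkeeping in Stage~1: for certain shapes of $B_1$, individual edges on the $b$-to-$v'$ path may see their $\psi$-value increase under the swap even when the total change is negative, so one has to sum signed contributions rather than argue edgewise. A cleaner route is to decompose the $2$-swap into a sequence of unit slides of the subtree $B_3$ along the $b$-to-$v'$ path, each of which is a direct instance of Lemma~\ref{333}; this avoids the global bookkeeping at the cost of a longer but more transparent transformation sequence. Stage~2, in contrast, is a straightforward sorting argument once the identity $\psi(v_j v_{j+1}) = |2 S_j - A|$ is in hand.
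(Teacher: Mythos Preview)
The paper does not prove this lemma; it is quoted verbatim from Deng--Li \cite{DL} and used as a black box throughout. So there is no ``paper's own proof'' to compare your attempt against.

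As a standalone sketch your two-stage plan is the standard route and is essentially what Deng--Li do. Stage~2 is fine: the identity $\psi(v_jv_{j+1})=|2S_j-A|$ is correct (the $v_j$-side of that spine edge has exactly $S_j+1$ vertices), and the adjacent-transposition argument that kills interior local maxima of the degree profile is a routine rearrangement.

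One concrete error, however: your proposed ``cleaner route'' for Stage~1 does not work. Lemma~\ref{333} concerns two \emph{pendent paths} attached at a single vertex $u$ and the effect of shifting one endpoint-vertex between them; it says nothing about sliding an arbitrary subtree $B_3$ along an internal path. A unit slide of $B_3$ from $b$ to its neighbour on the $b$--$v'$ path is not an instance of $G_{u;\ell,m}\to G_{u;\ell+1,m-1}$, so you cannot invoke Lemma~\ref{333} here. You are therefore stuck with the signed bookkeeping you flagged as the obstacle. That bookkeeping does go through (this is exactly the content of the Deng--Li argument), but your proposal as written leaves it as an unproved claim, so Stage~1 is presently a gap rather than a proof.
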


\begin{Lemma}\label{0}
Let $T$ be a tree with a longest path  $P_r= v_0v_1 \cdots v_r$.  Let  $ \{v_{i-1}, v_{i+1}, u_1, \dots , u_t\}$, where $ t\geq 1$ be the neighbours of a vertex  $v_i \in P_r$  ($1 \leq i \leq r-1 $). Suppose that $n_0$ (resp. $n_r$) be the order of the components of $T-E(P)$ containing $v_0$ (resp. $v_r$).  Let $T'=T-\{v_iu_j: 1\leq j \leq c\}+\{v_0u_j: 1\leq j \leq c\}$, where $1\leq c \leq t$. If $n_r  \geq n_0 + d_T(v_i, v_0)$, then $ Mo(T') < Mo(T) $.
\end{Lemma}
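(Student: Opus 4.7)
My plan is to reduce $Mo(T)-Mo(T')$ to a sum over just the $i$ path edges between $v_0$ and $v_i$, and then to sign every summand by means of the hypothesis $n_r\ge n_0+d_T(v_i,v_0)$.

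Set $a_j=|V(A_j)|$, where $A_j$ is the component of $T-v_iu_j$ containing $u_j$, and let $A=a_1+\cdots+a_c$. A direct edge-by-edge inspection will show that every edge other than $e_k:=v_{k-1}v_k$ with $1\le k\le i$ contributes the same $\psi$-value to $Mo(T)$ and to $Mo(T')$: edges internal to any subtree attached to the path, path edges $v_{k-1}v_k$ with $k\ge i+1$, and the relocated edges (which appear as $v_iu_j$ in $T$ and as $v_0u_j$ in $T'$, both cutting off a subtree of size $a_j$) all split the ambient tree into two components of the same pair of sizes. Writing $\beta_k$ for the order of the component of $T-e_k$ containing $v_0$, the corresponding component in $T'$ has order $\beta_k+A$, so
\[
Mo(T)-Mo(T')=\sum_{k=1}^{i}\Bigl(|n-2\beta_k|-|n-2\beta_k-2A|\Bigr).
\]

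The core of the argument is then to show that $\beta_k+A\le n/2$ for every $k\in\{1,\dots,i\}$, i.e.\ that the $v_0$-side of each $e_k$ remains the lighter side after the relocation. Since $\beta_1<\beta_2<\cdots<\beta_i$, it suffices to verify the estimate at $k=i$. Writing $\alpha_j$ for the order of the component of $T-E(P)$ containing $v_j$, the $v_r$-side of $e_i$ in $T$ has weight $\alpha_i+\alpha_{i+1}+\cdots+\alpha_r$, and I would bound this from below using the $\ge 1+A$ vertices in the $v_i$-component of $T-E(P)$, the path vertices $v_{i+1},\dots,v_r$, and the $n_r$ vertices in the $v_r$-component of $T-E(P)$; combining these with the hypothesis $n_r\ge n_0+i$ and with the restrictions that the longest-path property of $P_r$ places on the interior branch weights $\alpha_1,\dots,\alpha_{i-1}$ should yield the desired inequality $\beta_i+A\le n/2$.

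Once that bound is in hand, each summand in the displayed formula collapses to $(n-2\beta_k)-(n-2\beta_k-2A)=2A$, and hence
\[
Mo(T)-Mo(T')=2iA>0,
\]
since $i\ge 1$ and $A\ge 1$. The main obstacle is precisely the weight estimate $\beta_i+A\le n/2$: the hypothesis $n_r\ge n_0+d_T(v_i,v_0)$ bounds only the two terminal weights, so it is the longest-path constraint on $P_r$ that controls the interior branches and allows one to propagate the hypothesis into a bound on the full partial sum $\beta_i=\alpha_0+\alpha_1+\cdots+\alpha_{i-1}$.
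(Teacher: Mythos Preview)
Your reduction to the $i$ path--edges $e_k=v_{k-1}v_k$ and the resulting expression
$Mo(T)-Mo(T')=\sum_{k=1}^{i}\bigl(|n-2\beta_k|-|n-2\beta_k-2A|\bigr)$
is exactly the paper's line of attack; the only cosmetic difference is that the paper writes the $v_0$--side weight as $n_0+i-k$ in place of your $\beta_k$, thereby tacitly assuming that $v_1,\dots,v_{i-1}$ carry no side branches, whereas your $\beta_k=\alpha_0+\cdots+\alpha_{k-1}$ is the honest quantity.

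The gap is in the crucial estimate $\beta_i+A\le n/2$. You propose to obtain it from ``the restrictions that the longest-path property of $P_r$ places on the interior branch weights $\alpha_1,\dots,\alpha_{i-1}$'', but the longest-path condition constrains only the \emph{depth} of the branch at each $v_j$ (it is at most $\min\{j,r-j\}$), never its \emph{order}: one may attach arbitrarily many pendent vertices at $v_1$ without destroying the longest-path property of $P_r$, so $\alpha_1$---and hence $\beta_i$---can be made as large as one likes relative to $n_0,n_r,i$. Thus the planned route to $\beta_i+A\le n/2$ cannot be completed as described. The paper, working under its tacit $\beta_k=n_0+i-k$ assumption, instead deduces $n_0+i\le n/2$ directly from the hypothesis $n_r\ge n_0+i$ and then argues only the weaker relation $\beta_k<n-(\beta_k+A)$, which already forces each summand to be positive (though it no longer gives the clean value $2A$ per edge). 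As a final observation: taken literally, a longest path has pendent endpoints, so $n_0=n_r=1$ and the hypothesis $n_r\ge n_0+i$ is never met for $i\ge1$; the lemma's genuine content is in the paper's later applications, where $P$ is the spine of a caterpillar and the interior branches are controlled by other structural assumptions rather than by the longest-path property.
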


\begin{proof}
Note that $n_r  \geq n_0 + d_T(v_i, v_0)$, we have $n_0 + d_T(v_i, v_0)\le \frac{n}{2}$.
Let $s=d_T(v_i,v_0)$ and let $z_0\dots z_s$   be the path from $v_i$ to $v_0$, where $z_0=v_i$ and $z_s=v_0$. Let $n_c'$ be the  number of vertices of the components of $T-v_i$ containing one of $u_1,\dots, u_c$.
For $k=0, \dots, s$, we have $n_0+s-k<\min\{n_0+n_c'+s-k, n-(n_0+n_c'+s-k)\}\le \frac{n}{2}$, implying that
$\psi_T(z_{k-1}z_k)=|(n_0+s-k)-(n-(n_0+s-k))|>|n_0+n_c'+s-k-(n-(n_0+n_c'+s-k))|=\psi_{T'}(z_{k-1}z_k)$. Therefore,
$Mo(T)-Mo(T')=\sum_{k=1}^s\psi_T(z_{k-1}z_k)- \sum_{k=1}^s\psi_{T'}(z_{k-1}z_k)>0$, i.e., $Mo(T)>Mo(T')$.
\end{proof}

For $a,b \geq 0$ with $2(a + b)\leq n-1$, let $C_{n, a,b}$ be the tree obtained from the path $P_{n-a-b} =v_1v_2 \dots v_{n-a-b}$ by attaching a pendent vertex $u_i$ to $v_i$ for each $i$ with $2 \leq i \leq a +1$ and each $i$ with $n-a-2b \leq i \leq n-a-b-1$. In particular, $C_{n, 0,0}= P_n$ .

%\begin{Lemma} \cite{GAN} \label{edge}
%  Let $T$ be a tree of order $n\geq 4$. Then $Mo(T)>  Mo( C_{ n,0,1}) >  Mo(C_{ n,0,0})$.
 %\end{Lemma}

\begin{Lemma}\label{444}
  Let $a$ and $b$ be positive integers with $a\geq b+2$ and $2(a+b)\leq n-3$. Then, $Mo(C_{n, a-1, b+1}) < Mo(C_{n, a, b})$.
 \end{Lemma}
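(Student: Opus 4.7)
The two trees $T=C_{n,a,b}$ and $T'=C_{n,a-1,b+1}$ share the same spine $v_1v_2\cdots v_{n-a-b}$ and agree in every pendent except that $T$ has a pendent at $v_{a+1}$ (absent in $T'$) while $T'$ has a pendent at $v_{n-a-2b-1}$ (absent in $T$). Both trees therefore have $a+b+2$ pendent edges, each contributing $n-2$ to the Mostar index by Lemma~\ref{111}. Thus $Mo(T)-Mo(T')$ is entirely supplied by the internal spine edges $e_i=v_iv_{i+1}$ for $2\le i\le n-a-b-2$.

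For such an edge, let $L_T(i)$ denote the order of the component of $T-e_i$ containing $v_i$, and similarly $L_{T'}(i)$, so that $\psi_T(e_i)=|2L_T(i)-n|$. The only possible discrepancy between $L_T(i)$ and $L_{T'}(i)$ comes from the two displaced pendents: the pendent of $T$ at $v_{a+1}$ lies on the $v_i$-side iff $i\ge a+1$, whereas the pendent of $T'$ at $v_{n-a-2b-1}$ lies on the $v_i$-side iff $i\ge n-a-2b-1$. Hence
\[
L_T(i)-L_{T'}(i)=\begin{cases}1, & a+1\le i\le n-a-2b-2,\\ 0, & \text{otherwise.}\end{cases}
\]
On the middle range one checks directly that $L_{T'}(i)=i+a-1$; setting $Y_i=2L_{T'}(i)-n$ gives $Y_{i+1}=Y_i+2$, equivalently $|Y_i+2|=|Y_{i+1}|$, so the difference of Mostar indices telescopes:
\[
Mo(T)-Mo(T')=\sum_{i=a+1}^{n-a-2b-2}\bigl(|Y_i+2|-|Y_i|\bigr)=|Y_{n-a-2b-2}+2|-|Y_{a+1}|=|n-4b-4|-|n-4a|.
\]

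To conclude, I verify that this closed form is strictly positive using the hypotheses. Since $a\ge b+2$ and $2(a+b)\le n-3$ combine to give $n\ge 2a+2b+3\ge 4b+7$, we have $|n-4b-4|=n-4b-4$. A short case split on the sign of $n-4a$ then yields $Mo(T)-Mo(T')=4(a-b-1)>0$ (when $n\ge 4a$, using $a\ge b+2$) or $Mo(T)-Mo(T')=2(n-2a-2b-2)>0$ (when $n<4a$, using $2(a+b)\le n-3$), both strictly positive. The only delicate part of the argument is the bookkeeping of $L_T(i)$ and $L_{T'}(i)$ at the boundary indices $i=a+1$ and $i=n-a-2b-1$, where the left and right pendent clusters meet the middle stretch of the spine; in particular one must check that the two $L$-values coincide at $i=n-a-2b-1$ (the new right pendent exactly compensates for the missing left pendent), so that the telescoping sum closes cleanly on the stated range.
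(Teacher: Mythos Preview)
Your proof is correct and follows essentially the same route as the paper: move one pendent from $v_{a+1}$ to $v_{n-a-2b-1}$, observe that only the spine edges $e_i=v_iv_{i+1}$ with $a+1\le i\le n-a-2b-2$ change their $\psi$-value, telescope the resulting sum to $|n-4b-4|-|n-4a|$, and finish with the same sign analysis. The paper compresses the telescoping step into a single line, whereas you spell it out via $L_{T'}(i)=i+a-1$ and $Y_i=2L_{T'}(i)-n$; the arguments are otherwise identical.
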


\begin{proof}
Let $P_{1,n-a-b}$ be the path from $v_1$ to $v_{n-a-b}$. Let $T=C_{n, a, b}$, and $T'= T- v_{a+1}u_{a+1} + v_{n-a-2b-1}u_{a+1}$. Then,   $T' \cong C_{n, a-1, b+1}$.
From  $T$ and $T'$,  one has $\psi_T(e)=\psi_{T'} (e)$ for each  $e \in T \setminus E(P_{a+1, n-a-2b-2})$. If $2a\le \frac{n}{2}$, then $2b+2 <2a \le \frac{n}{2}$, and otherwise, $2b+2  < n-2a < \frac{n}{2}$. Then $Mo(T)-Mo(T') = |2b+2-(n-2b-2)| - |2a-(n-2a)|  > 0$.   Thus $Mo(T') < Mo(T)$, implying that $Mo(C_{n, a-1, b+1}) < Mo(C_{n, a, b})$.
\end{proof}

\section{Mostar index and the number of odd vertices}

For integers $n$ and $k$ with  $ 1 \leq k\leq \lfloor \frac{n}{2}\rfloor$, let $\mathcal{O}(n,k)$ be the set of trees of order $n$ with $2k$ odd vertices.

\begin{Theorem}\label{max}
Let $T$ be in $\mathcal{O}(n,k)$, where  $ 1 \leq k\leq \lfloor \frac{n}{2}\rfloor$ with maximum Mostar index. Then $T\cong S_{n,2k}$, where if $k = \frac{n}{2}$, then $S_{n,n}= S_n$.
\end{Theorem}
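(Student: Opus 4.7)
The plan is to reduce the problem to two earlier results in the preliminaries: Theorem~\ref{100}, which identifies $S_{n,r}$ as the unique tree of order $n$ with $r$ pendent vertices (for $3\le r\le n-2$) that maximizes the Mostar index, and Corollary~\ref{cr}, which shows that $Mo(S_{n,r})$ is strictly increasing in $r$. The key link between these results and the class $\mathcal{O}(n,k)$ is the trivial observation that every pendent vertex has degree $1$, which is odd; so if $T\in\mathcal{O}(n,k)$ has $r$ pendent vertices, then $r\le 2k$.

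First I would verify that $S_{n,2k}$ itself lies in $\mathcal{O}(n,k)$. By construction, $S_{n,2k}$ has exactly $2k$ leaves (each odd), a unique branch vertex of even degree $2k$, and all remaining vertices are internal vertices of the pendent paths and so have degree $2$; thus $S_{n,2k}$ has exactly $2k$ odd vertices. In the special case $k=n/2$ (so $n$ is even), $S_n$ has all $n$ vertices odd, as required by the stated convention.

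Given an extremal $T\in\mathcal{O}(n,k)$ with $r$ pendent vertices, I would then split into three cases. If $r=2$, then $T\cong P_n=S_{n,2}$ and necessarily $k=1$, so $T\cong S_{n,2k}$. If $3\le r\le n-2$, Theorem~\ref{100} gives $Mo(T)\le Mo(S_{n,r})$ with equality iff $T\cong S_{n,r}$, and Corollary~\ref{cr} combined with $r\le 2k$ yields $Mo(S_{n,r})\le Mo(S_{n,2k})$ with equality iff $r=2k$; chaining these gives $Mo(T)\le Mo(S_{n,2k})$ with equality iff $T\cong S_{n,2k}$. Finally, if $r=n-1$, then $T\cong S_n$, and $T\in\mathcal{O}(n,k)$ forces $2k=n-1$ when $n$ is odd and $2k=n$ when $n$ is even; in both sub-cases $S_{n,2k}=S_n=T$.

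I do not foresee a serious obstacle: the argument is essentially bookkeeping around the inequality $r\le 2k$ together with the monotonicity of $Mo(S_{n,r})$ in $r$. The only point that requires care is matching the edge cases $r=2$ and $r=n-1$ to the admissible values of $k$, and accommodating the theorem's notational convention $S_{n,n}=S_n$ at $k=n/2$.
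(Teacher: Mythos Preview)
Your proposal is correct and follows essentially the same approach as the paper: both rely on the observation that the number $r$ of pendent vertices satisfies $r\le 2k$, and then combine Theorem~\ref{100} with Corollary~\ref{cr} to obtain $Mo(T)\le Mo(S_{n,r})\le Mo(S_{n,2k})$ with equality exactly when $T\cong S_{n,2k}$. The only cosmetic difference is the case organisation---you split on the value of $r$ while the paper splits on $k$ (handling $k=1$, $k=\lfloor n/2\rfloor$, and $2\le k<\lfloor n/2\rfloor$ separately)---and you make the membership $S_{n,2k}\in\mathcal{O}(n,k)$ explicit, which the paper leaves implicit.
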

\begin{proof}
The  case $k=1$ is trivial as in this case, $T \cong S_{n,2}= P_n$.

If  $k= \lfloor \frac{n}{2}\rfloor$, then $  S_{n, 2\lfloor \frac{n}{2}\rfloor}= S_n \in \mathcal{O}(n,k)$,  and so the result follows by Theorem \ref{star}.

Suppose that $ 2 \leq k< \lfloor \frac{n}{2}\rfloor$, i.e., $ 4 \leq 2k\leq  n-2$. Let $r$ be the number of pendent vertices of $T$, then $r \leq 2k$. By Theorem \ref{100} and  Corollary \ref{cr}, $Mo(T)\leq Mo(S_{n,r}) \leq Mo(S_{n,2k})$ with equalities if and only if $T \cong S_{n,r}$ and $r=2k$, i.e., $T \cong S_{n,2k}$.
\end{proof}

\begin{Theorem}\label{min}
Let $T$ be in $\mathcal{O}(n,k)$, where  $ 1 \leq k\leq \lfloor \frac{n}{2}\rfloor$ with minimum Mostar index. Then $T\cong C_{n,\lceil\frac{k-1}{2}\rceil, \lfloor \frac{k-1}{2} \rfloor}$.
\end{Theorem}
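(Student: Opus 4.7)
For $k=1$, every tree in $\mathcal{O}(n,1)$ has exactly two odd-degree vertices; since leaves are always odd, this forces exactly two leaves, whence $T\cong P_n=C_{n,0,0}$. Henceforth assume $2\le k\le\lfloor n/2\rfloor$. The plan is to show, through a chain of Mostar-strictly-decreasing transformations that each preserve membership in $\mathcal{O}(n,k)$, that a minimizer $T$ is forced into the claimed balanced caterpillar.

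First I would apply Lemma~\ref{0} iteratively to push any off-spine subtree hanging from a longest path $P_r=v_0\cdots v_r$ of $T$ onto whichever endpoint sits in the larger half of $T$; each application strictly decreases $Mo$, and in finitely many steps $T$ becomes a caterpillar. Next, if the resulting caterpillar has some spine vertex $v_i$ with $d_T(v_i)\ge 4$, then $v_i$ carries at least two pendent edges, and Lemma~\ref{333} allows us to concatenate two of them into one pendent path of length $2$, strictly decreasing $Mo$ while lowering $d(v_i)$ by one. Iterating yields $\Delta(T)\le 3$. At this stage the three equations $\sum_v d_T(v)=2(n-1)$, $|\{v:d_T(v)\text{ odd}\}|=2k$ and $d_T(v)\in\{1,2,3\}$ uniquely determine the degree sequence as $(1^{k+1},2^{n-2k},3^{k-1})$.

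For this degree sequence, Lemma~\ref{imp} identifies the minimum-Mostar caterpillar as some $T_z(d_{i_1},\ldots,d_{i_z})$ whose spine degrees are non-increasing on a prefix and non-decreasing on the suffix; since only degrees $2$ and $3$ appear on the spine, this forces the $k-1$ vertices of degree $3$ to sit in two contiguous blocks, one at each end, so $T\cong C_{n,a,b}$ for some $a,b\ge 0$ with $a+b=k-1$. Finally, Lemma~\ref{444} yields $Mo(C_{n,a-1,b+1})<Mo(C_{n,a,b})$ whenever $a\ge b+2$ and $2(a+b)\le n-3$, and iterating this balancing move gives $a=\lceil(k-1)/2\rceil$, $b=\lfloor(k-1)/2\rfloor$. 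In the boundary case $k=\lfloor n/2\rfloor$ with $n$ even, where Lemma~\ref{444} does not apply, every $C_{n,a,b}$ with $a+b=k-1$ coincides as an unlabeled tree, so balancing is vacuous there.

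The principal technical obstacle is in the first two stages: Lemmas~\ref{0} and~\ref{333}, while genuinely min-type, each typically flip the parity of the vertex on which they operate and thus may change the count of odd-degree vertices by $\pm 2$. The core technical content of the proof is therefore to pair every such transformation with a compensating pendant-swap elsewhere in the tree that restores the count of odd vertices without increasing $Mo$; checking that a suitable compensating swap is always available, for every configuration of parities that can arise, is where the main work of the proof lies.
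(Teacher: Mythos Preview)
Your plan has a genuine gap, and you name it yourself in the last paragraph without closing it. Neither Lemma~\ref{0} nor Lemma~\ref{333} preserves the number of odd-degree vertices in general: moving $c$ branches from $v_i$ to the leaf $v_0$ flips both parities when $c$ is odd, and concatenating two pendent edges at $v_i$ into one path of length~$2$ lowers $d(v_i)$ by one while turning a leaf into a degree-$2$ vertex. You promise ``compensating pendant-swaps'' but never exhibit one, and it is not clear that a swap which simultaneously restores the odd count and does not increase $Mo$ is always available. Until that is supplied, steps~1--2 do not show that a minimizer in $\mathcal{O}(n,k)$ must be a caterpillar with $\Delta\le 3$; they only show that some tree of smaller Mostar index exists, possibly outside $\mathcal{O}(n,k)$. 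A secondary issue: the concatenation in step~2 creates pendent paths of length $\ge 2$, so the caterpillar structure from step~1 is destroyed before you reach step~4.

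The paper sidesteps the parity problem entirely by reordering the argument. It first notes that a minimizer $T\in\mathcal{O}(n,k)$ is in particular a minimizer among trees with the \emph{same degree sequence} as $T$; since all such trees lie in $\mathcal{O}(n,k)$, Lemma~\ref{imp} applies at once and $T$ is already a caterpillar of the stated monotone shape---no parity bookkeeping needed. Only then does the paper pin down the degree sequence, using two moves tailored to preserve parity: (i) if some odd degree is $2t+1\ge 5$, delete one pendent leaf and split $v_i$ into adjacent vertices of odd degrees $2t-1$ and $3$, so two odd vertices are lost and two gained; (ii) if some even degree is $2t\ge 4$, move a single pendent from $v_i$ to the leaf $v_0$, turning one even vertex odd and one odd vertex even. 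Each move stays in $\mathcal{O}(n,k)$ by construction and strictly decreases $Mo$ (via Lemma~\ref{111} and Lemma~\ref{0} respectively). This is the key idea your sketch is missing: rather than repair generic transformations after the fact, design parity-respecting ones from the start.
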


\begin{proof}
 Let $T\in \mathcal{O}(n,k)$  such that $Mo(T)$ is as small  as possible. Suppose that $(d_1 , d_2, ..., d_z)$ is the degree sequence of $T$, where $d_1\geq d_2\geq...\geq d_z \geq 2$,   $z\geq 2$. Then by Lemma  \ref{imp}, $T \cong T_z(d_{i_1}, d_{i_2},...,d_{i_z})$, where $d_{i_1}\geq d_{i_2}\geq,...,\geq d_{i_s}$ and $d_{i_{s+1}} \leq d_{i_{s+2}} \leq, ...,\leq d_{i_z}$ for some $s \in \{1,2,...,z-1\}$.

 The case  $k=1$ is trivial.
Suppose that $k\geq 2$.
 Since $T$ is a caterpillar, let $P= v_0v_1 \cdots v_rv_{r+1}$ be the spine of $T$. We claim that the maximum odd degree of $T$ is exactly 3. Otherwise,  there exist a vertex $v_i$ ($1\leq i \leq r$) such that $d_T(v_i)= 2t+1$, where $t\geq 2$. Let $u_1,...,u_{2t-1}$ be the neighbors of $v_i$ outside $P$ in $T$. Let $T'$ be the tree obtained from $T$ by deleting the pendent vertex $u_{2t-1}$ and the edges $v_iu_1, v_iu_2,..., v_iu_{2t-2}$, splitting $v_i$ into two adjacent vertices $v'_i$ and $v''_i$, joining $v_iu_1, \dots , v_iu_{2t-3}$ to  $v'_i$ and  $u_{2t-2}$ to $v''_i$.
 Obviously, $T'\in \mathcal{O}(n,k)$.
We denote by $e_1, e_2,...e_{r-1}$   (resp. $f_1, f_2,...f_r$  )  the consecutive edges in the spine $P=v_1 \cdots v_i \cdots v_r$ of $T$ (resp. $P= v_1 \cdots v'_iv''_i \cdots v_r$ of $T'$). Note that the number of pendent edges in $T$ is $|E(T)|-(r-1)= n-r$,  and the number of pendent edges in $T'$ is $|E(T')|-r = n-1-r$.

From $T$ and $T'$,  one has $\psi_T(e_j)=\psi_{T'} (f_j)$ for any  $j \in \{1,2,\dots,i-1\}$, and  $\psi_T(e_{\ell})=\psi_{T'} (f_{\ell+1})$ for any  $\ell \in \{i,i+1,...,r-1\}$. Then $Mo(T)-Mo(T') =  (n-2)(n-r)-((n-2)(n-r-1) + \psi_{T'} (f_i) ) = (n-2) - \psi_{T'} (f_i)> 0$.  By Lemma \ref{111},  we have $Mo(T')<Mo(T)$, a contradiction. Thus, the maximum odd degree of $T$ is exactly 3.

If  $k=\frac{n}{2}$ i.e., $2k=n$, then by Proposition \ref{imp},  $T\cong C_{n,\lceil\frac{k-1}{2}\rceil, \lfloor \frac{k-1}{2} \rfloor}$.

Suppose that $k < \frac{n}{2}$.   Then there is at least one even vertex in $T$. We claim that all the even vertices have a degree exactly 2. Otherwise,  for some $v_i$   $(1 \leq i \leq r)$, we have $d_T(v_i)= 2t$ such that  $N_T(v_i) = \{v_{i-1}, v_{i+1}, u_1, \dots , u_{2t-2}\}$, where $ t\geq 2$. Suppose that $n_1$ (resp. $n_2$) be the order of the components of $T-v_i$ containing $v_0$ (resp. $v_{r+1}$). Assume that  $n_1 \leq n_2$. Let $T''=T-\{v_iu_3+v_0u_3\}$, where $v_0$ is the pendent vertex in $T$. Note that in $T''$ the degree of $v_i$ is odd and the degree of $v_0$ is even, implying that $T''\in \mathcal{O}(n,k)$. By Lemma \ref{0}, $Mo(T'')<Mo(T)$, a contradiction. Thus, all the even vertices in $T$ have a degree exactly 2.

Consequently, $T$ is a tree with exactly $n-2k$ vertices of a degree 2.  Note that $T$ is a caterpillar, therefore the number of pendent vertices in $T $ is $k+2$. Let $t$ be the numbers of  vertices of a degree 3 in $T$. The relation $\sum_{v \in V(G)}d_T(v)=2|E(T)|=2(n-1)$ gives that $k+2+2(n-2k)+3t=2n-2$,  so $t=k-1$. Therefore, the degree sequence of $T$ is $(\underbrace{3,...,3}_\text{k-1}, \underbrace{2,...,2}_\text{n-2k}, \underbrace{1,...,1}_\text{k+2})$.  By Lemma \ref{imp}, $T \cong C_{n, a, b}$, for some $a,b$ with $a+b= k-1$. By Lemma \ref{444}, $T\cong C_{n,\lceil\frac{k-1}{2}\rceil, \lfloor \frac{k-1}{2} \rfloor}$.
\end{proof}

 Let $T$ be a tree of order $n$ with $k$ branch vertices. Let $s$ be the number of pendent vertices in $T$. Then $ s + 2(n-s-k)+3k \leq 2(n-1)$, i.e., $s \geq k + 2$. This implies that $2k +2 \leq k + s \leq n$, and thus, $k \leq \frac{n}{2}-1$

\begin{Corollary}\label{cr2} Let $T$ be a tree of order $n$ with $k$ branch vertices, where  $ 0 \leq k\leq  \frac{n}{2}-1$ with minimum Mostar index. Then $T\cong C_{n,\lceil\frac{k}{2}\rceil, \lfloor \frac{k}{2} \rfloor}$.
\end{Corollary}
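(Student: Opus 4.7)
The plan is to mirror the proof of Theorem \ref{min}, with the crucial modification that the local transformation used in the key step must preserve the number of branch vertices rather than the number of odd vertices. For $k=0$, the tree $T$ contains no branch vertex, so $T=P_n=C_{n,0,0}$ and the statement is immediate from Theorem \ref{star}. Assume therefore that $k\geq 1$, let $T$ be a minimizer, and invoke Lemma \ref{imp} to deduce that $T$ is a caterpillar $T_z(d_{i_1},\dots,d_{i_z})$ whose spine degrees first decrease and then increase.

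The crux is to show that every branch vertex of $T$ has degree exactly $3$. Suppose for contradiction that some spine vertex $v_i$ is a branch vertex with $d_T(v_i)=d\geq 4$; then $v_i$ has at least one off-spine pendent neighbor $u$. Let $P=w_0w_1\cdots w_r$ be a longest path of $T$ through $v_i$, so that $w_0$ and $w_r$ are leaves, and assume without loss of generality that the component of $T-v_i$ containing $w_0$ is no larger than the one containing $w_r$. Define $T'=T-v_iu+w_0u$. In $T'$ the degree of $v_i$ drops from $d\geq 4$ to $d-1\geq 3$, so $v_i$ remains a branch vertex, while $w_0$ changes from a pendent to a degree-$2$ vertex and hence is \emph{not} a branch vertex; all other degrees are unchanged. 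Thus $T'$ is another tree of order $n$ with exactly $k$ branch vertices. Applying Lemma \ref{0} with $w_0$ on the smaller side of $v_i$ gives $Mo(T')<Mo(T)$, contradicting the minimality of $T$.

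Once every branch vertex has degree exactly $3$, the handshaking identity $\sum_{v}d_T(v)=2(n-1)$ combined with $k+m+s=n$ (where $s$ and $m$ are the numbers of pendents and degree-$2$ vertices) forces $s=k+2$ and $m=n-2k-2$. Combining this degree sequence with the caterpillar structure from Lemma \ref{imp} identifies $T$ with $C_{n,a,b}$ for some $a,b\geq 0$ with $a+b=k$. Iterating Lemma \ref{444} then yields $(a,b)=(\lceil k/2\rceil,\lfloor k/2\rfloor)$; the boundary case $2k=n-2$ (possible only when $n$ is even) is trivial, since then $m=0$ and all trees $C_{n,a,b}$ with $a+b=k$ coincide up to isomorphism.

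The principal obstacle is the key step: verifying that Lemma \ref{0} genuinely applies to the transformation $T\to T'$, in particular that its component-size hypothesis is satisfied once $w_0$ is chosen on the smaller side of $v_i$. Granting this, the remainder of the argument is a direct assembly of Lemma \ref{imp} and Lemma \ref{444}.
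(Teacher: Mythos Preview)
Your proof is correct and follows essentially the same approach as the paper: both handle $k=0$ trivially, use Lemma~\ref{imp} to reduce to a caterpillar, and then apply Lemma~\ref{0} to a single off-spine pendent (moved to the leaf on the lighter side) to force maximum degree $3$. The only minor difference is in the last paragraph: after computing $s=k+2$, the paper observes that $T$ then has exactly $2k+2$ odd vertices and invokes Theorem~\ref{min} directly, whereas you re-derive that conclusion by combining Lemma~\ref{imp} with Lemma~\ref{444}; both routes are equivalent, and your explicit handling of the boundary case $2k=n-2$ is a nice touch.
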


\begin{proof}
Let $T$ be  a tree of order $n$ with $k$ branch vertices  such that $Mo(T)$ is as small  as possible.
If  $k=0$, then $T \cong P_n$ and it is trivial.

 Suppose that $k\geq 1$.
 Let $(d_1 , d_2, \dots , d_z)$ be the degree sequence of $T$, where $d_1\geq d_2\geq \cdots\geq d_z \geq 2, z\geq 2$. Then by Lemma  \ref{imp}, $T \cong T_z(d_{i_1}, d_{i_2},...,d_{i_z})$, where $d_{i_1}\geq d_{i_2}\geq,...,\geq d_{i_s}$ and $d_{i_{s+1}} \leq d_{i_{s+2}} \leq, ...,\leq d_{i_z}$ for some $s \in \{1,2,...,z-1\}$.

Since $T$ is a caterpillar, let $P=v_0v_1 \dots v_r$ be the spine of $T$. Let $\Delta$ be the maximum degree of $T$. We claim that $\Delta =3$. Suppose that $\Delta \geq 4$, then for some $v_i \in V(T)$, where  $(1 \leq i \leq r-1 $) such that  $N_T(v_i) = \{v_{i-1}, v_{i+1}, u_1, \dots , u_{\Delta -2}\}$. Suppose that $n_1$ (resp. $n_2$) be the order of the components of $T-v_i$ containing $v_0$ (resp. $v_r$). Without loss of generality, we assume that  $n_1 \leq n_2$. Let $T'=T-\{v_iu_1+v_0u_1\}$, where $v_0$ is a pendent vertex of $T$. Obviously, $T'$ is a tree of order $n$ with $k$ branch vertices. By Lemma \ref{0}, $Mo(T')<Mo(T)$, a contradiction. Thus, $\Delta = 3$.  Let $s$ be the number of pendent vertices in $T$. As $ s + 2(n-s-k)+3k = 2(n-1)$, we have $s = k+2$,  and thus $T$ is a tree of order $n$ with $2k +2$ odd vertices. By Theorem \ref{min}, we have $T\cong C_{n,\lceil\frac{k}{2}\rceil, \lfloor \frac{k}{2} \rfloor}$.
\end{proof}

Let $\mathcal{O}_{2n}$ be the set of all trees with $2n$ vertices, which are all odd. By Theorem \ref{star} and Theorem \ref{min}, we have the following result.

\begin{Theorem}\label{odd}
Let $T$ be in $\mathcal{O}_{2n}$, with maximum (resp. minimum) Mostar index. Then  $T\cong S_{2n}$ (resp. $T\cong C_{2n, 0, n-1}$).
\end{Theorem}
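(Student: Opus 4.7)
My plan is to obtain both halves of the statement as direct specialisations of results from Section~3, using the key observation that a tree of order $2n$ in which every vertex is odd has exactly $2n=2k$ odd vertices with $k=n=\lfloor 2n/2\rfloor$; hence the class $\mathcal{O}_{2n}$ coincides with $\mathcal{O}(2n,n)$. With this identification the problem reduces to checking which extremal tree lies inside $\mathcal{O}_{2n}$ for the maximum, and to simplifying the symbolic output of Theorem~\ref{min} for the minimum.

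For the maximum I would simply observe that $S_{2n}$ has degree sequence $(2n-1,1,\dots,1)$, every entry of which is odd, so $S_{2n}\in\mathcal{O}_{2n}$. Since Theorem~\ref{star} asserts that $S_{2n}$ is already the unique maximiser of $Mo$ over \emph{all} trees of order $2n$, it remains the unique maximiser over the subclass $\mathcal{O}_{2n}$.

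For the minimum I would invoke Theorem~\ref{min} with $k=n$ to get the minimiser in the form $C_{2n,\lceil (n-1)/2\rceil,\lfloor (n-1)/2\rfloor}$, and then verify that this is the same tree as the one named in the statement. Unwinding the construction of $C_{2n,a,b}$ when $a+b=n-1$: the underlying path has $2n-(n-1)=n+1$ vertices $v_1,\dots,v_{n+1}$; the ``first block'' of $a$ pendents is attached at $v_2,\dots,v_{a+1}$, while the ``second block'' of $b=n-1-a$ pendents is attached at $v_{2n-a-2b},\dots,v_{2n-a-b-1}=v_{a+2},\dots,v_n$. The union of these attachment points is exactly $\{v_2,\dots,v_n\}$, and this set is independent of how $a+b=n-1$ is split. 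Consequently every $C_{2n,a,b}$ with $a+b=n-1$ is the same caterpillar, so in particular $C_{2n,\lceil (n-1)/2\rceil,\lfloor (n-1)/2\rfloor}\cong C_{2n,0,n-1}$, which is the form stated.

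There is no genuine obstacle: the whole derivation is routine given Theorems~\ref{star} and~\ref{min}. The only point worth spelling out in the write-up is the isomorphism of the two symbolic forms of the minimiser, which is the elementary combinatorial check just described.
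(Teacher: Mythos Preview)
Your proposal is correct and follows essentially the same route as the paper, which simply records the result as an immediate consequence of Theorem~\ref{star} and Theorem~\ref{min}. Your additional verification that $C_{2n,\lceil (n-1)/2\rceil,\lfloor (n-1)/2\rfloor}\cong C_{2n,0,n-1}$ (because when $a+b=n-1$ the two attachment blocks merge into the full range $v_2,\dots,v_n$ regardless of the split) fills in a step the paper leaves implicit.
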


\section{Mostar index and the number of vertices of degree two}

For integers $n$ and $t$ with  $ 0 \leq t\leq n-2$, let $\mathcal{E}(n,t)$ be the set of trees of order $n$ with $t$ vertices of degree two.
Note that $\mathcal{E}(n,n-2)=\{P_n\}$ and $\mathcal{E}(n,n-3)=\emptyset$. So we only consider the trees in $\mathcal{E}(n,t)$ with  $ 0 \leq t\leq n-4$.

\begin{Theorem}\label{}
Let $T$ be in $\mathcal{E}(n,t)$, where $ 0 \leq t\leq n-4$  with  maximum Mostar index. Then  $T\cong S_{n, n-t-1}$.
\end{Theorem}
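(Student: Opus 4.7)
The plan is to reduce the problem to Theorem \ref{100} and Corollary \ref{cr} by bounding the number of pendent vertices of any $T\in\mathcal{E}(n,t)$ in terms of $t$, and then exploiting the fact that $Mo(S_{n,r})$ is strictly increasing in $r$.

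First I would set $n_i$ equal to the number of degree-$i$ vertices of $T$, so $n_2=t$, and combine $\sum_i n_i = n$ with the handshake identity $\sum_i i\,n_i = 2(n-1)$ to obtain $\sum_{i\geq 2}(i-1)n_i = n-2$. Substituting $n_2=t$ gives $\sum_{i\geq 3}(i-1)n_i = n-2-t$. Because $t\leq n-4$ the right-hand side is at least $2$, so $T$ must contain at least one vertex of degree $\geq 3$; moreover, since every coefficient on the left is $\geq 2$, a simple estimate yields
\[
r := n_1 = n - t - \sum_{i\geq 3} n_i \;\leq\; n-1-t,
\]
with equality precisely when $T$ has a unique branch vertex, necessarily of degree $n-1-t$ (i.e.\ $T$ is a spider with $n-1-t$ legs).

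Next I would invoke the previously-established extremal results. Since $t\leq n-4$ excludes $T=P_n$, we have $r\geq 3$. If $3\leq r\leq n-2$, Theorem \ref{100} gives $Mo(T)\leq Mo(S_{n,r})$ with equality iff $T\cong S_{n,r}$; if $r=n-1$, then $T=S_n=S_{n,n-1}$ trivially and the inequality is an equality. Iterating Corollary \ref{cr} from $r$ up to $n-1-t$ gives $Mo(S_{n,r})\leq Mo(S_{n,n-1-t})$, with equality iff $r=n-1-t$. Chaining these,
\[
Mo(T)\;\leq\;Mo(S_{n,r})\;\leq\;Mo(S_{n,n-1-t}),
\]
and simultaneous equality forces both $T\cong S_{n,r}$ and $r=n-1-t$, hence $T\cong S_{n,n-1-t}$. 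A sanity check: $S_{n,n-1-t}$ has $(n-1)-(n-1-t)=t$ degree-two vertices, so it indeed lies in $\mathcal{E}(n,t)$, and the bound is attained.

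There is no deep obstacle: the only real content is the counting bound $r\leq n-1-t$, after which the extremality of $S_{n,r}$ among trees with $r$ pendents (Theorem \ref{100}) and the monotonicity $Mo(S_{n,r})<Mo(S_{n,r+1})$ (Corollary \ref{cr}) do all the remaining work. The structure of the argument closely parallels that of Theorem \ref{max}.
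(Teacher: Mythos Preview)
Your proof is correct and follows the same overall architecture as the paper's---bound the number $r$ of pendent vertices by $n-1-t$, then apply Theorem \ref{100} and Corollary \ref{cr}---but your route to the leaf bound is different and more elementary. The paper first invokes Lemma \ref{222} to argue that an extremal $T$ can have only one branch vertex, and from this reads off $r\le n-1-t$; you instead obtain $r=n-t-\sum_{i\ge 3}n_i\le n-1-t$ directly from the handshake identity, with no structural lemma at all. Your counting argument is shorter and self-contained, and it avoids having to check that the modified trees $T',T''$ produced by Lemma \ref{222} still lie in $\mathcal{E}(n,t)$ (a point the paper glosses over). The final chaining through Theorem \ref{100} and Corollary \ref{cr}, together with the equality analysis, is identical in both proofs.
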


\begin{proof}
 Let $T \in \mathcal{E}(n,t)$  such that $Mo(T)$ is as large  as possible.

If  $t=0$, then $S_{n, n-1}= S_n \in \mathcal{E}(n,t)$, so the result follows by Theorem \ref{star}.

Suppose that $t\geq 1$. We claim that $T$ contains exactly one vertex having degree at least 3. Otherwise, $T$ contains two vertices, say $u, v$ having  degree at least 3.
Suppose that  $u_1, \dots , u_x$ and $v_1, \dots , v_y$ be the pendent vertices adjacent to $u$ and $v$, respectively.
Let $u'$ (resp. $v'$) be  the neighbor of $u$ (resp. $v$) in  the path connecting $u$ and $v$.
 Let $T'= T- \{uu_i: i=1,\dots, x\} + \{u'u_i:i=1,\dots, x\}$, and $T''= T- \{vv_i: i=1,\dots, y\} + \{v'v_i:i=1,\dots, y\}$. Then by Lemma \ref{222}, $Mo(T)<\max\{ Mo(T'), Mo(T'')\}$, a contradiction. Thus, $T$ contains exactly one vertex having degree at least 3. Let $z$ be the number of pendent vertices of $T$, then $z \le n-t-1$. By Theorem \ref{100} and  Corollary \ref{cr}, we have $Mo(T)\leq Mo(S_{n,z}) \leq Mo(S_{n,n-t-1})$ with equalities if and only if $T \cong S_{n,z}$ and $z=n-t-1$, i.e., $T \cong S_{n,n-t-1}$.
\end{proof}

For $a,b \geq 0$ with $2(a + b)\leq n-5$, let $F_{n, a, b}$ be the tree obtained from the path $P_{n-a-b-2}=  v_1v_2 \cdots v_{n-a-b-2}$ by attaching a pendent vertex $u_1$ to $v_2$, and then attaching pendent vertex $u_i$ to $v_i$ for each $i$ with $2 \leq i \leq a +2$ and each $i$ with $n-a-2b-2 \leq i \leq n-a-b-3$.

\begin{Lemma}\label{555}
 For $a\geq b \geq 1$ and $2(a+b) < n-5$, we have  $Mo(F_{n, a-1, b+1}) < Mo(F_{n, a, b})$.
 \end{Lemma}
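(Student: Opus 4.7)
The plan is to adapt the argument of Lemma~\ref{444} to the family $F_{n,a,b}$. Set $T = F_{n,a,b}$ and form $T'$ by detaching the pendent $u_{a+2}$ from $v_{a+2}$ and reattaching it at $v_{n-a-2b-3}$. The hypothesis $2(a+b) < n-5$ guarantees that $a+2 < n-a-2b-3$, so these are distinct spine vertices and the target vertex carries no pendent in $T$; a direct inspection of pendent positions then shows $T' \cong F_{n, a-1, b+1}$, and it suffices to prove $Mo(T) > Mo(T')$.

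Next I would compare the two trees edge by edge. By Lemma~\ref{111} every pendent edge contributes exactly $n-2$, and $T$ and $T'$ have the same number of pendent edges, so pendent edges cancel. For a spine edge $v_i v_{i+1}$ let $n_L(i)$ denote the number of vertices on the $v_1$-side; its contribution is $|2 n_L(i) - n|$. If $i \leq a+1$ or $i \geq n-a-2b-3$, the relocated pendent lies on the same side of the edge in both trees, so $n_L^T(i) = n_L^{T'}(i)$ and these edges cancel as well. Only the middle range $a+2 \leq i \leq n-a-2b-4$ survives, where $n_L^T(i) = i + a + 2$ while $n_L^{T'}(i) = i + a + 1$. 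Setting $f(k) = |2k - n|$, the difference telescopes to
\[
Mo(T) - Mo(T') = f(n - 2b - 2) - f(2a + 3) = (n - 4b - 4) - |4a + 6 - n|,
\]
with $n - 4b - 4 \geq 2$ since $4b \leq 2(a+b) \leq n - 6$.

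The main (and really the only) obstacle is the final positivity check, which is forced by the remaining absolute value. I would split into two cases: if $n \geq 4a + 6$ the expression becomes $4(a-b) + 2 > 0$ from $a \geq b \geq 1$, and if $n < 4a+6$ it becomes $2n - 4a - 4b - 10 \geq 2$ using $n \geq 2(a+b) + 6$. Either way $Mo(F_{n,a,b}) > Mo(F_{n,a-1,b+1})$, completing the argument. Beyond this case split, the only thing requiring care is the bookkeeping: verifying that the middle index range is nonempty (again from $2(a+b) \leq n-6$) and confirming that no other edges suffer a change in $n_L$ from the relocation.
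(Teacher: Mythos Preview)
Your proof is correct and follows essentially the same approach as the paper's own argument: the same relocation $T' = T - v_{a+2}u_{a+2} + v_{n-a-2b-3}u_{a+2}$, the same identification $T' \cong F_{n,a-1,b+1}$, the same observation that only the middle spine edges change, and the same resulting difference $|n-4b-4| - |4a+6-n|$ handled by the identical two-case split on the sign of $4a+6-n$. Your write-up is in fact more explicit than the paper's (you spell out the telescoping and the index bookkeeping), but the method is the same.
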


 \begin{proof}
Let $P_{1,n-a-b-2}$ be the path from $v_1$ to $v_{n-a-b-2}$. Let $T=F_{n, a, b}$, and $T'= T- v_{a+2}u_{a+2} + v_{n-a-2b-3}u_{a+2}$. Then   $T' \cong F_{n, a-1, b+1}$.
From  $T$ and $T'$,  one has $\psi_T(e)=\psi_{T'} (e)$ for each  $e \in T \setminus E(P_{a+2, n-a-2b-4})$.  If $2a+3\le \frac{n}{2}$, then $2b+2 <2a+3 \le \frac{n}{2}$, and otherwise, $2b+2<n-2a-3<\frac{n}{2}$.  Then $Mo(T)-Mo(T') = |2b+2-(n-2b-2)| - |2a+3 -(n-2a-3)|  > 0$.   Thus $Mo(T') < Mo(T)$, implying that $Mo(F_{n, a-1, b+1}) < Mo(F_{n, a, b})$.
\end{proof}

\begin{Theorem}\label{two}
Let $T$ be in $\mathcal{E}(n,t)$, where $ 0 \leq t\leq n-4$ with  minimum Mostar index.
\begin{enumerate}[label=(\roman*) ]
\item If $n-t=5$, then $T \cong F_{n, 0, 0}$. If $n-t$ is odd and at least $7$, then $T\cong F_{n, \lceil\frac{n-t-5}{4}\rceil-1, \lfloor\frac{n-t-5}{4}\rfloor+1 }$.
\item If $n-t$ is even, then $T\cong C_{n, \lceil\frac{n-t}{4}-\frac{1}{2}\rceil, \lfloor\frac{n-t}{4}-\frac{1}{2}\rfloor} $.
\end{enumerate}
\end{Theorem}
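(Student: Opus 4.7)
Let $T\in\mathcal{E}(n,t)$ attain the minimum. My plan follows the proof of Theorem~\ref{min} closely, with the invariant ``number of odd vertices'' replaced by ``number of degree-$2$ vertices''.

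\textbf{Bounding the maximum degree.} By Lemma~\ref{imp}, $T$ is a caterpillar with a valley-shaped spine degree sequence. I first claim $\Delta(T)\le 4$: if some spine vertex $v_i$ has degree $\Delta\ge 5$, delete a pendent of $v_i$ and split $v_i$ into two adjacent vertices $v'_i$ of degree $3$ and $v''_i$ of degree $\Delta-2\ge 3$, distributing $v_i$'s remaining neighbors appropriately. Since neither $v'_i$ nor $v''_i$ has degree $2$, the resulting tree $T'$ lies in $\mathcal{E}(n,t)$, and the identical computation to the one in the proof of Theorem~\ref{min} together with Lemma~\ref{111} yields $Mo(T)-Mo(T')=(n-2)-\psi_{T'}(v'_iv''_i)>0$, a contradiction.

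\textbf{Pinning down the number of degree-$4$ vertices.} Letting $k_3,k_4$ count the degree-$3$ and degree-$4$ branches and $s$ the pendents, the handshake identity together with $s+t+k_3+k_4=n$ forces $k_4\equiv n-t\pmod 2$. Hence the smallest feasible $k_4$ is $0$ when $n-t$ is even and $1$ when $n-t$ is odd. I show $T$ attains this minimum. Otherwise, pick two degree-$4$ vertices $v_p,v_q$, which by Lemma~\ref{imp}'s valley shape may be taken to be the two extreme spine vertices. Apply a combined split-and-delete: split $v_q$ into two adjacent degree-$3$ vertices (routing $v_q$'s spine neighbor and pendents suitably) and simultaneously delete a pendent of $v_p$. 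This preserves both $n$ and $t$ while $k_4$ drops by two, and an edge-by-edge computation (analogous to Step~1 but more involved, since the split and the deletion occur at different vertices) shows $Mo$ strictly decreases, contradicting minimality.

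\textbf{Identifying the extremal tree and balancing.} With $k_4$ fixed at $0$ or $1$, the degree multiset of $T$ is $(3^k,2^t,1^s)$ or $(4,3^{k-1},2^t,1^s)$ for the appropriate $k$. Lemma~\ref{imp}'s valley shape then determines the structure up to the placement of the degree-$4$ vertex (which in the odd case must sit at one end of the spine), giving $T\cong C_{n,a,b}$ for some $a+b=(n-t-2)/2$ in the even case and $T\cong F_{n,a,b}$ for some $a+b=(n-t-5)/2$ in the odd case. Applying Lemma~\ref{444} (respectively Lemma~\ref{555}) whenever its hypothesis is met, together with a short direct comparison for the boundary $(a,b)$ not covered by the lemma, drives $(a,b)$ to the exact values stated in~(ii) (respectively~(i)); in particular $n-t=5$ forces $(a,b)=(0,0)$ and $T\cong F_{n,0,0}$.

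\textbf{Main obstacle.} The heart of the proof is the combined split-and-delete used to rule out $k_4\ge 2$ (resp.\ $k_4\ge 3$). Unlike the single-vertex split-delete used for $\Delta\ge 5$, here the split at $v_q$ and the pendent deletion at $v_p$ happen at different spine vertices, so the clean cancellation of $\psi$-values used in Theorem~\ref{min} no longer applies directly: the $\psi$-values of the intermediate spine edges between $v_p$ and $v_q$ each shift by $\pm 2$ depending on which side of the edge is heavier, and verifying that the total spine-edge change plus the $\psi$-value of the newly created spine edge is strictly less than the saved pendent contribution $n-2$ requires a careful aggregate estimate.
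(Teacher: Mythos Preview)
Your Step~1 is correct, though the paper handles $\Delta\ge 5$ differently and more simply: instead of the split--delete, it moves $\Delta-3$ pendents from $v_i$ to the spine endpoint $v_0$ via Lemma~\ref{0}. Then $d_{T'}(v_i)=3$ and $d_{T'}(v_0)=\Delta-2\ge 3$, so no degree-$2$ vertex is created or destroyed and Lemma~\ref{0} gives the strict drop immediately.

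The genuine gap is your Step~2. You correctly identify the parity constraint $k_4\equiv n-t\pmod 2$, but the ``combined split--and--delete'' you propose to eliminate two degree-$4$ vertices is precisely the step you flag as unresolved: the $\psi$-values of the spine edges between $v_p$ and $v_q$ each shift by $\pm 2$, and you have not carried out the aggregate estimate showing the total change is dominated by the saved $n-2$. Without that computation the argument is incomplete.

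The paper sidesteps this difficulty entirely, and the reason is instructive: you are implicitly trying to stay inside the class $\{\Delta\le 4\}$ when reducing $k_4$, but there is no need to. If $x,y$ are two degree-$4$ vertices, simply move \emph{one} pendent from $x$ to $y$ (choosing the direction so that the hypothesis of Lemma~\ref{0}, applied to the subpath from $x$ to $y$, is satisfied). In the resulting tree $T''$ one has $d_{T''}(x)=3$ and $d_{T''}(y)=5$; neither is $2$, so $T''\in\mathcal{E}(n,t)$, and Lemma~\ref{0} gives $Mo(T'')<Mo(T)$ directly. That $T''$ now has a vertex of degree $5$ is irrelevant: we only need \emph{some} tree in $\mathcal{E}(n,t)$ with smaller Mostar index to contradict minimality of $T$. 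This one-line move replaces your entire ``main obstacle''.

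Your Steps~3 and~4 match the paper.
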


\begin{proof}
Let $T$ be in $\mathcal{E}(n,t)$  such that $Mo(T)$ is as small  as possible.
Suppose that $(d_1 , d_2, ..., d_z)$ be the degree sequence of $T$, where $d_1\geq d_2\geq...\geq d_z \geq 2, z\geq 2$.By lemma  \ref{imp}, $T \cong T_z(d_{i_1}, d_{i_2},...,d_{i_z})$, where $d_{i_1}\geq d_{i_2}\geq,...,\geq d_{i_s}$ and $d_{i_{s+1}} \leq d_{i_{s+2}} \leq, ...,\leq d_{i_z}$ for some $s \in \{1,2,...,z-1\}$.

Since $T$ is a caterpillar, let $P= v_0v_1 \cdots v_r$ be the spine of $T$.
Let $\Delta $ be the maximum degree of $T$. We claim that $\Delta \leq 4$. Suppose that $\Delta \geq 5$, then for some $v_i \in V(T)$, where  $(1 \leq i \leq r-1 $) such that  $N_T(v_i) = \{v_{i-1}, v_{i+1}, u_1, \dots , u_{\Delta -2}\}$. Suppose that $n_1$ (resp. $n_2$) be the order of the components of $T-v_i$ containing $v_0$ (resp. $v_r$). Assume that  $n_1 \leq n_2$. Let $T'=T-\{v_iu_j : 2\le j \le \Delta-2 \}+\{v_0u_j: 2\le j \le \Delta-2 \}$, where $v_0$ is a pendent vertex of $T$. Obviously, $T' \in \mathcal{E}(n,t)$. By Lemma \ref{0}, $Mo(T')<Mo(T)$, a contradiction. Thus, $\Delta \leq 4$.

Further, we claim that $T$ has at most one vertex of degree 4.
Suppose that $T$ contains at least two vertices of degree 4, say $x,y$. Let $n_x$ (resp. $n_y$) be the order of the component of $T-E(P)$ containing $x$ (resp. $y$).
Assume that $n_x \ge n_y + d_T(x,y)$. Then $n_y + d_T(x,y)\le \frac{n}{2}$.
Let $u$  be the neighbor of $x$ outside $P$ in $T$. Let $T''=T-xu+yu$. Obviously, $T'' \in \mathcal{E}(n,t)$. By Lemma \ref{0}, $Mo(T')<Mo(T)$, a contradiction.  Thus, $T$ contains at most one vertex of degree 4.

Let $d_i$ be the number of vertices of degree $i$ in $T$, where $1 \leq i\leq 4$. Note that $d_1+d_2+d_3+d_4=n$ and $d_1+2d_2+3d_3+4d_4=2(n-1)$. Since $d_2=t$ and $d_4 \leq 1$.

\noindent {\bf Case 1.} $n-t$ is odd.

From the above relation, we have $d_1= \frac{n-t+3}{2},  d_3= \frac{n-t-5}{2}$, and $d_4 =1$. If $n-t=5$, then the degree sequence of $T$ is
$(4, \underbrace{2,...,2}_\text{t}, \underbrace{1,...,1}_\text{n-t-1})$.
By Lmma \ref{imp}, $T \cong F_{n, 0, 0}$. If $n-t\geq 7$, then the degree sequence of $T$ is
$(4, \underbrace{3,...,3}_\frac{n-t-5}{2}, \underbrace{2,...,2}_\text{t}, \underbrace{1,...,1}_\frac{n-t+3}{2})$.
By Lemma \ref{imp}, $T \cong F_{n, a, b}$,  for some $a,b$ with $a+b= \frac{n-t-5}{2}$. By Lemma \ref{555}, $T\cong F_{n, \lceil\frac{n-t-5}{4}\rceil-1, \lfloor\frac{n-t-5}{4}\rfloor+1 }$.

\noindent {\bf Case 2.} $n-t$ is even.

From the above relation, we have $d_1= \frac{n-t+2}{2}$,   $d_3= \frac{n-t-2}{2}$, and $d_4 =0$.  Since $d_4 =0$, then the maximum degree of $T$ is 3, and the degree sequence of $T$ is ($\underbrace{3,...,3}_\frac{n-t-2}{2}, \underbrace{2,...,2}_\text{t}, \underbrace{1,...,1}_\frac{n-t+2}{2})$ by Proposition \ref{imp}, $T \cong C_{n, a, b}$, for some $a,b$ with $a+b= \frac{n-t-2}{2}$. By Lemma \ref{444}, $T\cong C_{n, \lceil\frac{n-t}{4}-\frac{1}{2}\rceil, \lfloor\frac{n-t}{4}-\frac{1}{2}\rfloor} $.
\end{proof}

As an immediate consequence of Theorem \ref{two}, we obtain the following result.

\begin{Corollary}
Let $T$ be a series-reduced tree of order $n\geq 5$ with minimum Mostar index. Then  $T\cong F_{n, \lceil\frac{n-5}{4}\rceil-1, \lfloor\frac{n-5}{4}\rfloor+1 }$ (resp.  $T\cong C_{n, \lceil\frac{n-2}{4}\rceil, \lfloor\frac{n-2}{4}\rfloor} $ ) for odd $n$ (resp. even $n$ ).
\end{Corollary}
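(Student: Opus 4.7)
The plan is to deduce the corollary directly from Theorem \ref{two} by specializing to the parameter $t=0$. First I would observe that a tree is series-reduced precisely when every internal vertex has degree at least $3$, which (since non-internal vertices are pendent) is equivalent to having no vertex of degree $2$. Therefore the collection of series-reduced trees of order $n$ coincides with $\mathcal{E}(n,0)$, and a Mostar-minimizer in the former is a Mostar-minimizer in the latter.

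Next I would substitute $t=0$ into the two cases of Theorem \ref{two}. For odd $n$ the value $n-t=n$ is odd, so Theorem \ref{two}(i) yields the extremal tree $F_{n,\lceil (n-5)/4\rceil-1,\lfloor (n-5)/4\rfloor+1}$ whenever $n\geq 7$; in the boundary case $n=5$ the statement of Theorem \ref{two}(i) gives $F_{5,0,0}$, which one checks is the star $S_{5}$ (the unique series-reduced tree on $5$ vertices). For even $n$ Theorem \ref{two}(ii) applies with $n-t=n$ and returns $C_{n,\lceil n/4-1/2\rceil,\lfloor n/4-1/2\rfloor}$; rewriting $n/4-1/2=(n-2)/4$ converts this into the displayed form $C_{n,\lceil (n-2)/4\rceil,\lfloor (n-2)/4\rfloor}$.

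There is really no obstacle here: the entire argument is a mechanical translation of the $t=0$ instance of Theorem \ref{two}. The only point that needs a moment of attention is the small case $n=5$, where the odd-$n$ formula $\lceil (n-5)/4\rceil-1$ returns the value $-1$ and so must be interpreted through the boundary clause of Theorem \ref{two}(i); aside from this notational caveat, no new work is required beyond citing the theorem.
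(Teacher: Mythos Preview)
Your proposal is correct and matches the paper's approach exactly: the paper presents this corollary with no explicit proof, stating only that it is an immediate consequence of Theorem~\ref{two}, and your argument spells out precisely that specialization to $t=0$. Your remark about the $n=5$ boundary case is a fair observation about the statement itself rather than a gap in the derivation.
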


\section{Mostar index and the number of pendent paths of fixed length}

For integers $n,r$ and $k$, let $S^r_{n, k}$ be the tree consisting of $k$ pendent paths of length $r$  and $n-kr-1$ pendent paths of length one at a common vertex, where $k=1$ and  $ 2 \leq r\leq n-3$, or $k\geq 2$, $r\geq 2$ and $kr \leq n-2$.

\begin{Theorem}\label{}
Let $T$ be a tree of order $n$ with $k$ pendent paths of length $r$, where $k=1$ and $ 2 \leq r\leq n-3$, or $k\geq 2$, $r\geq 2$ and $kr \leq n-2$, with  maximum Mostar index. Then $T\cong S^r_{n, k}$.
\end{Theorem}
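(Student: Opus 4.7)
The plan is to prove any optimal $T$ has a unique branch vertex $v$ at which every pendent path is attached, with $k$ of these of length $r$ and the remaining $n - kr - 1$ of length $1$; a vertex count then yields $T \cong S^r_{n,k}$.

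First I would show $T$ has at most one branch vertex. If two branch vertices $u, v$ exist, chosen at minimum distance with joining path $u = x_0 x_1 \cdots x_s = v$ (so $x_1, \ldots, x_{s-1}$ have degree $2$), contract the non-pendent cut edge $ux_1$ (or $uv$ if $s = 1$). By Lemma \ref{cut} this strictly increases $Mo$. Crucially, this move preserves the class: the number of length-$r$ pendent paths equals the number of pendent vertices whose distance to the nearest branch vertex is at least $r$, and these distances are unchanged in every subtree rooted at $u$ or $v$, which transfer intact to the merged vertex; the new pendent edge created by the contraction has length $1 \neq r$ (since $r \geq 2$). This contradicts the maximality of $Mo(T)$.

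Next, assume $T$ has a unique branch vertex $v$, so $T$ is determined by the multiset $(\ell_1, \ldots, \ell_d)$ of branch lengths at $v$, and the length-$r$ pendent path count equals $\#\{i : \ell_i \geq r\} = k$. I claim each $\ell_i \in \{1, r\}$. Otherwise, pick $\ell_j \geq 2$ with $\ell_j \neq r$: if $\ell_j < r$, let $T'$ replace the length-$\ell_j$ branch with $\ell_j$ pendent edges at $v$; if $\ell_j > r$, let $T'$ replace it with one length-$r$ branch plus $\ell_j - r$ pendent edges at $v$. In both cases $\#\{i : \ell_i \geq r\}$ is preserved (it is $0$ throughout the first case, $1$ throughout the second), so $T'$ remains in the class. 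A direct $\psi$-by-$\psi$ comparison gives $Mo(T') > Mo(T)$: every edge outside the modified branch sees identical partitions in $T$ and $T'$ (the $v$-side gains the same $\ell_j$ extra vertices), while each new pendent edge contributes $n - 2$, the maximum by Lemma \ref{111}, strictly exceeding the contribution $|n - 2m|$ of any edge internal to the removed segment (valid because $2 \leq m \leq \ell_j \leq n - 3$). This contradicts maximality, so each $\ell_i \in \{1, r\}$, and the constraint $\#\{i : \ell_i = r\} = k$ together with $\sum_i \ell_i = n - 1$ yields $T \cong S^r_{n, k}$.

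Finally, one must rule out $T = P_n$ (no branch vertex), which arises only when $k = 2$; the hypothesis $kr \leq n - 2$ forces $n \geq 2r + 2$, so $S^r_{n, 2}$ has at least one pendent edge and differs from $P_n$. Using the direct formula $Mo(S^r_{n, 2}) = (n - 2r - 1)(n - 2) + 2r(n - r - 1)$ and the standard closed form for $Mo(P_n)$, one checks $Mo(S^r_{n, 2}) > Mo(P_n)$ whenever $n \geq 2r + 2$, contradicting the supposed optimality of $P_n$. The principal difficulty throughout is maintaining exactly $k$ pendent paths of length $r$ under each Mostar-increasing modification; the key observation is that such a path at $v$ is counted whenever some branch has length $\geq r$, so overlong branches ($\ell_j > r$) must be shortened to exactly $r$ with the excess becoming pendent edges, rather than eliminated entirely.
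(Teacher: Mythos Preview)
Your proof is correct and follows a genuinely different route from the paper's. The paper works with the set $X$ of vertices at which a pendent path of length $r$ is rooted: it first argues $|X|=1$ via an edge-relocation and a direct $\psi$-computation, and only then invokes Lemma~\ref{cut} to force every remaining neighbour of the unique root to be pendent. You instead reduce first to a single branch vertex by contracting the non-pendent edge between two closest branch vertices (here Lemma~\ref{cut} carries all the weight, and your arm-length characterisation of the pendent-path count makes class-preservation transparent), and then use a direct $\psi$-comparison against Lemma~\ref{111} to normalise all arm lengths to $\{1,r\}$. Your decomposition sidesteps a delicate point in the paper's argument---the assertion that internal vertices on the path between two nearest elements of $X$ all have degree two is not obvious, since a branch vertex carrying only pendent paths of length $\neq r$ need not lie in $X$---at the cost of having to treat the branchless case $T\cong P_n$ separately, which you dispatch correctly for $k=2$. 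Both proofs ultimately rest on Lemma~\ref{cut} and Lemma~\ref{111}, but deploy them in reversed order.
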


\begin{proof}
Let $T$ be a tree of order $n$ with $k$ pendent paths of length $r$ such that $Mo(T)$ is as large as possible.

Let $X$  be the set of vertices at which there is a pendent path of length $r$ in $T$. Obviously, the order of $X$ is at least one. We claim that $X$ contains exactly one vertex. Suppose that $|V(X)|\geq 2$, and $v_1, v_2 \in V(X)$  such that $d_T(v_1,v_2)$ is as small as possible. Let $P$ be the path connecting $v_1$ and $v_2$. If
$d_T(v_1,v_2)>1$, then each internal vertex of $P$ is of degree two. Let $n_1$ (resp. $n_2$) be the order of the component of $T-E(P)$ containing $v_1$ (resp. $v_2$).
Assume that $n_1\ge n_2$. Obviously, $n_2\le \frac{n}{2}$.
Let $u$ be the neighbor of $v_2$ in $P$  and  $w$  be one other neighbor of $v_2$. Let $n_w=n_w(v_2w|T)$.
 Let $T'=T-v_2w+uw$. Obviously,  $T'$ is a tree of order $n$ with $k$ pendent paths of length $r$. Note that $\psi_T(e)=\psi_{T'}(e)$ for $e\in E(T)\setminus \{uv_2, v_2w\}=E(T')\setminus\{uv_2, uw\}$ and $\psi_T(v_2w)=\psi_{T'}(uw)$. Then
 \[
 Mo(T)-Mo(T')=\psi_T(uv_2)-\psi_{T'}(uv_2)=|n_2-(n-n_2)|-|(n_2-n_w)-n-(n_2-n_w)|<0,
 \]
implying that $Mo(T)<Mo(T')$, a contradiction. Thus, $T$ consists of $k$ pendent paths of length $r$  at a common vertex. Let $X= \{z\}$

Note that if $k=1$ (resp. $k\geq 2$ ) then there are two (resp. one) neighbours of $z$ not lying on the pendent path of length $r$. We claim that all neighbours of $z$ not lying on the pendent path of length $r$ are pendent. Let $s$ be any such neighbours of $z$. Suppose that $d_T(s) \geq 2$.
Let $T/zs$ be the tree obtained from $T$ by contracting the edge $e=zs$ into a new vertex, say $u$ (such that it is adjacent to each vertex in $N_T(z)\cup N_T(s)\setminus\{z,s\}$ and then attaching  a pendent edge at $u$. Obviously, $T/zs$ is a tree of order $n$ with $k$ pendent paths of length $r$. By Lemma \ref{cut}, $Mo(T)<Mo(T/zs)$, a contradiction. Thus, $d_T(s) = 1 $.  That is,  $T\cong S^r_{n, k}$.
\end{proof}

For positive integers $n,r, a$ and $b$ with $a\geq b$ and $(a + b)r\leq n-2$, let $A^r_{n, a, b}$ be the $n-$vertex  tree obtained from the path $P_{n-a-b}= v_1v_2 \dots v_{n-a-b}$ by attaching $a$ and $b$ pendent paths of length $r$  at $v_1$ and $v_{n-a-b}$, respectively.

\begin{Lemma}\label{666}
Suppose that $a\geq b+2$ and $(a + b)r < n-1$. Then $Mo(A^r_{n, a-1, b+1}) < Mo(A^r_{n, a, b})$.
\end{Lemma}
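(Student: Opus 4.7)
My plan is to follow the template of the proof of Lemma \ref{444}. Let $T=A^r_{n,a,b}$, and write the central path as $v_1v_2\cdots v_m$ with $m=n-(a+b)r$; the hypothesis $(a+b)r<n-1$ ensures $m\geq 2$. Construct $T'\cong A^r_{n,a-1,b+1}$ by detaching one of the $a$ pendent paths of length $r$ rooted at $v_1$ and reattaching an identical pendent path at $v_m$.

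The first reduction is that every pendent-path edge contributes the same $\psi$-value in $T$ and in $T'$: an edge at distance $k\in\{1,\dots,r\}$ from a leaf of a pendent path separates $k$ vertices (the leaf side of its pendent path) from the remaining $n-k$, and this split depends only on $k$, not on which endpoint of the central path the pendent path is rooted at. Hence $Mo(T)-Mo(T')$ collapses to a sum over the $m-1$ central-path edges. Setting $x_j:=2j+2ar-n$, a direct count of vertices on either side of $v_jv_{j+1}$ gives $\psi_T(v_jv_{j+1})=|x_j|$ and $\psi_{T'}(v_jv_{j+1})=|x_j-2r|$, so
\[
Mo(T)-Mo(T')\;=\;f(0)-f(2r),\quad\text{where}\quad f(\delta):=\sum_{j=1}^{m-1}|x_j-\delta|.
\]

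The core step is to prove $f(0)>f(2r)$. I would exploit that the $x_j$ form an arithmetic progression with common difference $2$ that is symmetric about $\mu:=\tfrac12(x_1+x_{m-1})=(a-b)r$ (using $n=m+(a+b)r$). Since $f$ is convex and piecewise linear, it is minimised exactly on the median set of $\{x_1,\dots,x_{m-1}\}$, which equals $\{\mu\}$ when $m-1$ is odd and $[\mu-1,\mu+1]$ when $m-1$ is even. The hypothesis $a\geq b+2$ combined with $r\geq 1$ yields $\mu=(a-b)r\geq 2r>0$, so both $0$ and $2r$ lie on or to the left of this median set, and $2r$ is strictly closer to it than $0$.

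The main thing to watch is the boundary case $a=b+2$, where $\mu=2r$: here $2r$ lies in the median set while $0$ does not (since $2r-1\geq 1>0$), giving $f(0)>\min f=f(2r)$. In every other case $2r$ lies strictly to the left of the median set, and strict monotonicity of $f$ on $(-\infty,\mu]$ (respectively on $(-\infty,\mu-1]$ for the even case) forces $f(0)>f(2r)$. Either way, $Mo(T)>Mo(T')=Mo(A^r_{n,a-1,b+1})$, completing the proof.
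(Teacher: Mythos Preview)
Your argument is correct. Both you and the paper start by observing that every pendent-path edge contributes the same $\psi$-value in $T$ and in $T'$, so $Mo(T)-Mo(T')$ is carried entirely by the $m-1$ central-path edges. From that point the two arguments diverge.

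The paper asserts that this difference equals the single expression $|2(b+1)r-n|-|2ar-n|$ and then checks its sign. That identity, however, is only valid when $r=1$ (where shifting the summation index by one leaves exactly one uncancelled term at each end); for $r\ge 2$ it fails. For instance, with $(n,r,a,b)=(10,2,3,1)$ one has $m=2$, a single central edge with $\psi_T=4$ and $\psi_{T'}=0$, hence $Mo(T)-Mo(T')=4$, while the paper's formula gives $|8-10|-|12-10|=0$. So the paper's proof, as printed, has a genuine gap.

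Your convexity/median route avoids any closed-form evaluation. Writing the central-edge contribution as $f(\delta)=\sum_{j=1}^{m-1}|x_j-\delta|$ and locating the median of the arithmetic progression $\{x_j\}$ at $(a-b)r\ge 2r>0$, you obtain $f(0)>f(2r)$ directly from the shape of $f$: the point $0$ lies strictly left of the minimising interval while $2r$ lies at or left of it, and strict monotonicity on the left tail does the rest. This handles every $r\ge 1$ uniformly and in effect repairs the paper's argument.
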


\begin{proof}
Let $P_{1,n-a-b}$ be the path from $v_1$ to $v_{n-a-b}$. Let $T=A^r_{n, a, b}$. Let $V_1$ be all the vertices of $a$ pendent paths at $v_1$ except $v_1$ and $u_1u_2 \cdots u_rv_1$ be one pendent path at $v_1$. Let $V_2$ be all the vertices of $b$ pendent paths at $v_{n-a-b}$ except $v_{n-a-b}$ and  $w_1w_2 \cdots w_rv_{n-a-b}$ be one pendent path at $v_{n-a-b}$.
Let $T' = T-v_1u_r+v_{n-a-b}u_r$. Clearly, $T' \cong A^r_{n, a-1, b+1}$.
 From $T$ and $T'$,  one has $\psi_T(e)=\psi_{T'} (e)$ for each  $e \in T \setminus E(P_{1,n-a-b})$.

If $ar\le \frac{n}{2}$, then $(b+1)r < ar\le \frac{n}{2}$, and otherwise, $(b+1)r <  n-ar <\frac{n}{2}$.
Then $Mo(A^r_{n, a, b})-Mo(A^r_{n, a-1, b+1})=|(b+1)r-(n-br-r)|-|ar-(n-ar)|>0$.
 Thus, $Mo(A^r_{n, a, b})> Mo(A^r_{n, a-1, b+1})$.
\end{proof}

\begin{Theorem}\label{}
Let $T$ be a tree of order $n$ with $k$ pendent paths of length $r$ with  minimum Mostar index.
\begin{enumerate}[label=(\roman*) ]
\item If $k=1$ and $ 2 \leq r\leq n-3$, then $T \cong A^1_{n, 1, 2}$.
\item If $k=2$ and $ 1 \leq r\leq n-2$, then $T \cong P_n$.
\item If $k\geq 3$ and $ 1 \leq r\leq \frac{n-2}{k}$, then $T \cong A^r_{n, \lceil\frac{k}{2}\rceil, \lfloor\frac{k}{2}\rfloor}$.
\end{enumerate}
\end{Theorem}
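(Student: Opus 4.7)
The proof splits into the three stated cases. Case~(ii) is essentially immediate: the path $P_n$ has exactly two pendent paths of length $r$ for every $1\le r\le n-2$, one extending from each pendent vertex through internal degree-two vertices to the vertex at distance $r$; since $P_n$ uniquely minimizes the Mostar index over all $n$-vertex trees by Theorem~\ref{star}, we have $T\cong P_n$.

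For Case~(i), the hypothesis $k=1$ with $r\ge 2$ forces exactly one pendent vertex $u_0$ of $T$ to have its unique neighbor of degree two, while every other pendent vertex is adjacent to a branching vertex. I would first argue that $T$ has a single branching vertex: if it had two, a transformation relocating the pendent bristles of one branching vertex toward the other (as in the proof of Theorem~\ref{min}, using Lemma~\ref{0}) strictly decreases the Mostar index without altering the number of pendent paths of length $r$, giving a contradiction. So $T$ has a unique branching vertex $v$ carrying some pendent edges together with one long arm of length $\ell$ ending at $u_0$. Merging further pendent edges into the long arm by Lemma~\ref{333} strictly decreases the Mostar index so long as $v$ retains at least two pendent edges (so that $v$ stays branching and the class is preserved); hence the minimum is attained when $v$ has exactly two pendent edges and one arm of length $n-3$, giving $T\cong A^1_{n,1,2}$. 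A direct check confirms that this tree indeed has exactly one pendent path of length $r$ for every $r\in\{2,\dots,n-3\}$.

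For Case~(iii), I aim to show $T\cong A^r_{n,a,b}$ for some $a+b=k$ with $a\ge b\ge 0$, and then balance $a,b$ via Lemma~\ref{666}. Using Lemma~\ref{0}, I relocate entire pendent paths of length $r$ that are attached at an interior branching vertex of a longest path in $T$ to an endpoint of that path; each such move strictly decreases the Mostar index while preserving $n$ and the count of pendent paths of length $r$, since whole pendent paths are moved as rigid units. Iterating forces all pendent paths of length $r$ to be attached to at most two vertices, placed at the endpoints of a single backbone path with no intervening branching vertex. Thus $T\cong A^r_{n,a,b}$ with $a+b=k$. Finally Lemma~\ref{666} asserts $Mo(A^r_{n,a-1,b+1})<Mo(A^r_{n,a,b})$ whenever $a\ge b+2$, so iterated application reduces $|a-b|$ to at most one, leaving $a=\lceil k/2\rceil$ and $b=\lfloor k/2\rfloor$.

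The main obstacle is the structural reduction in Case~(iii): ensuring that the moves produced by Lemma~\ref{0} preserve the precise count of length-$r$ pendent paths. The plan is to always move entire length-$r$ pendent paths (as rigid units, not piecemeal) from interior branching vertices to the end of a longest path, so that no length-$r$ pendent path is created or destroyed in the process; the strict Mostar decrease then follows from Lemma~\ref{0} under its side-condition $n_r\ge n_0+d_T(v_i,v_0)$, which is arranged by a suitable choice of longest path and endpoint.
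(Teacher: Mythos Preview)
Your outline for Case~(ii) is fine and matches the paper. Case~(i) is more fleshed out than the paper's own (very terse) argument, though your use of Lemma~\ref{0} there is loose: that lemma moves branches to an \emph{endpoint} $v_0$ of a longest path, not ``toward the other branching vertex,'' and you would still need to verify that such a move keeps the number of length-$r$ pendent paths equal to one.

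The real gap is in Case~(iii). Your reduction only argues that the $k$ pendent paths of length $r$ end up attached to \emph{at most two} vertices; you then jump to $T\cong A^r_{n,a,b}$. But nothing in your plan rules out the single-center configuration $|X|=1$, in which all $k$ length-$r$ paths hang from one vertex $v$ and the remainder of the tree is some subtree $T_v$ attached at $v$. This case cannot be eliminated by Lemma~\ref{0} alone. The paper handles it with a separate argument based on Lemma~\ref{333}: one shows that $T_v$ cannot contain a pendent path at $v$ of length $<r$ (else merge it into one of the $Q_i$ via $G_{v;r,\ell}\to G_{v;r+\ell,0}$, which preserves the count $k$ and strictly lowers $Mo$), and then that $T_v$ cannot have a branching vertex off $v$ (iterated applications of Lemma~\ref{333} straighten such a branch into a long pendent path at $v$, again without touching $Q_1,\dots,Q_k$). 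Only after this does one conclude $|X|\ge 2$.

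Two further points you gloss over. First, even once $|X|=2$, you must show that the path $P$ joining $s_1,s_2$ has no interior branching vertex and that $s_1,s_2$ carry no branches other than their length-$r$ pendent paths; the paper does this with another application of Lemma~\ref{0} and a final appeal to Lemma~\ref{333}. Second, your claim that moving ``whole pendent paths as rigid units'' automatically preserves the count $k$ is not safe: attaching a unit at the pendent endpoint $v_0$ changes $d(v_0)$, which can destroy a length-$r$ pendent path formerly starting at $v_0$ (or create one along the newly extended arm). The paper sidesteps this by moving branches to a vertex already in $X$, not to an arbitrary pendent endpoint. You should make the target of each move explicit and check the count case by case.
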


\begin{proof}
Let $T$ be a tree of order $n$ with $k$ pendent paths of length $r$ such that $Mo(T)$ is as small as possible.

  If $k=1 $ and $ 2 \leq r\leq n-3$, then $T \cong P_n$.
As $P_n$ is $n$-vertex tree with two pendent paths of length $r$ for each $r = 2,..., n-2$ and $A^1_{n, 1, 2}$ is $n$-vertex tree with one pendent path of length $r$ for each
$r = 2,..., n-3$ it follows that  $T \cong A^1_{n, 1, 2}$. This proves Item (i).

If $k=2 $ and $ 1 \leq r\leq n-2$,  then the result follows by Theorem \ref{star}.
As $P_n$  is the unique tree with minimum  Mostar index among all $n$-vertex trees. This proves Item (ii).

Suppose that $k\geq 3$ and $ 1  \leq r\leq \frac{n-2}{k}$.
Let $X$  be the set of vertices at which there is a pendent path of length $r$ in $T$. Clearly, the order of $X$ is at least one. We claim that $X$ contains exactly two vertices,  we need to show that $|X|\leq 2 $ and $|X|\geq 2 $.

First, we show that $|X|\leq 2 $.
Suppose that $|X|\geq 3 $.  We may choose two vertices, say $u, v $  such that $d_T(u,v)$ is as large as possible. Let $P$ be the path connecting $u$ and $v$.  Let $n_u$ (resp. $n_v$) be the order of the component of $T-E(P)$ containing $u$ (resp. $v$). We may choose vertices $w$ and $c$ in $P$ such that both $d_T(u, w)$ and $d_T(c,v)$ is as small as possible.
Assume that $n_u + d_T(u,w)\ge n_v + d_T(c,v)$. Then $n_v + d_T(c,v)\le \frac{n}{2}$.
Let $x_1, \dots x_p$  be the neighbors of $c$ outside $P$ in $T$, where $p=d_T(c)-2$. Let $T'=T-\{cx_i: i=1, \dots, p\}+\{vx_i: i=1, \dots, p\}$. Obviously, $T'$ is a tree of order $n$ with $k$ pendent paths of length $r$. By Lemma \ref{0}, $Mo(T)>Mo(T')$, a contradiction. Thus, $|X|\leq 2 $.

Secondly, we show that $|X|\geq 2 $. If $r=1$, and $k < n-1$, we have $|X|\geq 2 $.
Suppose that $r \geq 2$ and $|X|=1 $.
Then $k$ pendent paths of length $r$ of $T$ are $k$ branches of $T$ at a common vertex, say $v$ which we denote by $Q_1, \dots , Q_k$.
Let $T_v= T-\cup_{i=1}^k (V(Q_i)\setminus\{v\})$.  Then $T_v$ is a nontrivial tree as $rk < n-1$.
Moreover, $T_v$ is not pendent path of length greater than or equal to $r$ at $v$, as otherwise, $T$ must have $k +1$ pendent paths of length $r$, which is impossible.

Let $X=  V(Q_1)\setminus\{v\}$.
Suppose  that there is a pendent path, say $B_1$, at $v$ in $T$, which is also
a path in $T_v$. Then the length $\ell$ of $B_{\ell}$ is less than $r$. Let $X'=  V(B_1)\setminus\{v\}$ and $G = T - X -X'$. Then $T \cong G_{v,r,l}$.  It is easy to see that $G_{v,r+l,0}$
is a tree of order $n$ with $k$ pendent paths of length $r$. By Lemma \ref{333}, we have
$Mo(T)=Mo(G_{v, r,\ell}) > Mo(G_{v, r+1, \ell-1}) > \dots > Mo(G_{v, r+ \ell, 0})$,
a contradiction. Thus,  there is no pendent path at $v$ in $T$, which is also a path in $T_v$. There must be a vertex different from $v$ with degree at least 3
in $T_v$. Choose such a vertex $u$ so that the distance to $v$ is as small as possible. Let $Q$
be the branch of $T_v$ at $v$ containing $u$. Note that there are two pendent paths
at $u$ in $T_v$. Let $s_1$ and $s_2$ be their lengths with $s_1 \le s_2 $. Then $T \cong H_{u,s_1,s_2}$, where $H$ is a subtree of $T$.
By repeating this process, we may  obtain  a tree $T^\ast$ from $T$ such that the branch $Q$
of $T_v$ at $v$ is changed into a pendent path $B_2$ at $v$ with length $s= |V(Q|-1$.
By Lemma \ref{333}, we have
$ Mo(T) > Mo(H_{v, s_1 +s_2,0})  > \dots > Mo(T^\ast)$.
Note that $Q_1, \dots ,Q_k$ have still $k$ pendent paths of length $r$ in $T^\ast$.
Let $Y=  V(B_2)\setminus\{v\}$. Let $H' = T^\ast - X - Y$. Then  $T^\ast \cong H'_{v,r,s}$.  Clearly,  $H'_{v,r+s,0}$
ia a tree of order $n$ with $k$ pendent paths of length $r$.
By Lemma \ref{333}, we have $ Mo(T^\ast) = Mo(H'_{v,r,s})  > \dots > Mo(H'_{v,r+s,0})$, a contradiction. Thus, $|X|\geq 2$.
Hence,  $|X|= 2$, as claimed.

Now we show that $T \cong A^r_{n, a, b}$ for some $a$ and $b$ with $a \geq b \geq 1$ and $a+b=k$. If $r=1$, it is trivial.
Suppose that $r \geq 2$. Assume that $X= \{s_1, s_2 \}$. Let $P$ be the path connecting $s_1$ and $s_2$. Suppose that there is a vertex of degree at least three different from $s_1$ and $s_2$, say $w$ on  $P$.
 Let $n_1$ (resp. $n_2$) be the order of the component of $T-w$ containing $s_1$ (resp. $s_2$).
Assume that $n_1 \ge n_2 + d_T(w,s_2)$.
Let $x_1, \dots ,x_p$  be the neighbors of $w$ outside $P$ in $T$. Let $T^\star=T-\{wx_i: i=1, \dots, p\}+\{s_2x_i: i=1, \dots, p\}$. Obviously, $T^\star$ is a tree of order $n$ with $k$ pendent paths of length $r$. By Lemma \ref{0}, $Mo(T) > Mo(T^\star)$, a contradiction.
 Thus, all internal vertices of $P$ (if any exist) is of degree 2.

Denote by $a$ (resp. $b$) the number of pendent paths of length $r$ at $s_1$ (resp. $s_2$). Obviously, $a+b = k$, $d_T(s_1) \geq a+1$ and $d_T(s_2) \geq b+1$.
If $d_T(s_1) \geq a+2$, then there is a branch, say $Q$ at $s_1$ not containing $s_2$, which is not a pendent path of length $r$, but, as above by Lemma \ref{333}, $Q$ can not be a pendent path and it can not have a vertex different from $s_1$ of degree at least 3. So $d_T(s_1) = a+1$. Similarly, $d_T(s_2) = b+1$. Thus, $T \cong A^r_{n, a, b}$ for some $a$ and $b$ with $a, b \geq 1$ and $a+b=k$.
Assume that $a\geq b$. If $a\geq b+2$, then by Lemma \ref{666}, $Mo(A^r_{n, a-1, b+1}) < Mo(A^r_{n, a, b})$, a contradiction. Therefore, $a-b = 0, 1$, implying that $T \cong A^r_{n, \lceil\frac{k}{2}\rceil, \lfloor\frac{k}{2}\rfloor}$.
The case $r=1$ has been proved in \cite{HZ1}. This completes the proof.
\end{proof}

\section{Concluding remarks}

In this paper, we identified all the  graphs having the largest and smallest Mostar index respectively among $n$-vertex trees with fixed parameters like the number of odd vertices, the number of vertices of degree two and the number of pendent paths of fixed length. We also identified the unique graph that maximizes and minimizes the Mostar index respectively among trees with all odd vertices. Moreover, we identified the unique  graph that minimizes the Mostar index over all trees of order $n$ with a fixed number of branch vertices and among all series-reduced trees, respectively. For future work, it will be  interesting to determine all the graphs that maximize and minimize the Mostar index over  all series-reduced trees of order $n$ with fixed parameters like the diameter, the maximum degree, and the number of pendent vertices.

%\vspace{3mm}
%
%\noindent {\bf Author Contributions:} Both the authors contributed equally to the work.
%
%
%\vspace{3mm}
%
%\noindent {\bf Conflicts of Interest:} The authors declare that they have no conflict of interest regarding the publication of this article.

\vspace{3mm}

\noindent {\bf Acknowledgement:} This work was partially supported  by the National Natural Science Foundation of China (Grant Nos. 12071194, 11571155).

\vspace{3mm}


\begin{thebibliography}{20}

\bibitem{ACT} M. Arockiaraj, J. Clement and N. Tratnik, Mostar indices of carbon nanostructures and circumscribed donut benzenoid systems. Int. J. Quantum Chem. 119 (2019) e26043.

\bibitem{AD}  A. Ali, T. Do\v{s}li\'c, Mostar index: results and perspectives.  Appl. Math. Comput. 404 (2021) 19. 126245.

\bibitem{AXK}  Y. Alizadeh, K. Xu, S. Klav\v{z}ar, On the Mostar index of trees and product graphs.  Filomat 35 (2021) 4637--4643.

\bibitem{DL}  K. Deng, S. Li, On the extremal values for the Mostar index of trees with given degree sequence. Appl. Math. Comput. 390 (2021)  125598.

\bibitem{DL1}  K. Deng, S. Li, On the extremal Mostar indices of trees with a given segment sequence. Bull. Malays. Math. Sci. Soc. 45 (2021)  593--612.

\bibitem{DL2}  K. Deng, S. Li, Chemical trees with extremal Mostar index. MATCH Commun. Math. Comput. Chem.  85  (2021) 161--180.

\bibitem{DL3}  K. Deng, S. Li, Extremal catacondensed benzenoid with respect to the Mostar index. J. Math. Chem.  58  (2020) 1437--1465.

\bibitem{DoM}  T. Do\v{s}li\'c, I.  Martinjak, R. \v{S}krekovski, S. Tipuri\'c Spu\v{z}evi\'c, I.  Zubac, Mostar index. J. Math. Chem.  56  (2018) 2995--3013.

\bibitem{GXD}  F. Gao, K. Xu, T. Do\v{s}li\'c, On the difference between Mostar index and irregularity of graphs. Bull. Malays. Math. Sci. Soc. 44 (2021)  905--926.

\bibitem{Has}  J. Haslegrave, Extremal results on average subtree density of series-reduced trees. J. Combin. Theory Ser. B. 107 (2014) 26--41.

\bibitem{HP}  F. Harary, G. Prins, The number of homeomorphically irreducible trees, and other species.  Acta Math. 101 (1959) 141--162.

\bibitem{HX}  F. Hayat, S. J. Xu, A lower bound on the Mostar index of tricyclic graphs. Filomat   (Accepted).

\bibitem{HZ}  F. Hayat, B. Zhou, On cacti with large Mostar index. Filomat  33 (2019) 4865--4873.

\bibitem{HZ1} F. Hayat, B. Zhou, On Mostar index of trees with parameters. Filomat  33 (2019) 6453--6458.

\bibitem{HLM} S. Huang, S. Li, M. Zhang, On the extremal Mostar indices of hexagonal Chains. MATCH Commun. Math. Comput. Chem.  84  (2020) 249--271.

%\bibitem{JKR} J. Jerebic, S. Klav\v{z}ar, D.F. Rall, Distance-balanced graphs. Ann. Combin. 12 (2008) 71--79.

\bibitem{LD}  G. Liu, K. Deng, The maximum Mostar indices of unicyclic graphs  with given diameter. Appl. Math. Comput. 439 (2023)  127636.

\bibitem{MPR} \v{S}. Miklavi\v{c}, J. Pardey, D. Rautenbach,  F. Werner, Maximizing the Mostar index for bipartite graphs and split graphs. Discrete Optim. 48 (2023) 100768.

%\bibitem{MS} \v{S}. Miklavi\v{c}, P. \v{S}parl, $\ell$-distance-balanced graphs. Discrete Appl. Math. 244 (2018) 143--154.

\bibitem{Te} A. Tepeh,  Extremal bicyclic graphs with respect to Mostar index.  Appl. Math. Comput.  355  (2019) 319--324.

\bibitem{XZT}  Q. Xiao, M. Zeng, Z. Tang,  The hexagonal chains with the first three maximal Mostar indices. Discrete  Appl. Math. 288  (2020) 180--191.

\bibitem{XZT2} Q. Xiao, M. Zeng, Z. Tang,  H. Deng, H. Hua,  Hexagonal chains with first three minimal Mostar indices. MATCH Commun. Math. Comput. Chem.  85  (2021) 47--61.


\end{thebibliography}
\end{document}